\theoremstyle{plain}
\newtheorem{theorem}{Theorem}[section]
\newtheorem{lemma}[theorem]{Lemma}
\newtheorem{cor}[theorem]{Corollary}
\newtheorem{prop}[theorem]{Proposition}
\theoremstyle{definition}
\newtheorem{defi}[theorem]{Definition}
\theoremstyle{remark}
\newtheorem{rem}[theorem]{Remark}
\numberwithin{equation}{section}
\newcommand{\ai}{\ensuremath{A_\infty}}
\newcommand{\dmcmp}{\ensuremath{\overline{\mathcal{M}}_{g,n}}}
\newcommand{\kcmp}{\ensuremath{\mathcal{K}\overline{\mathcal{M}}_{g,n}}}
\newcommand{\lcmp}{\ensuremath{\mathcal{L}\left[\dmcmp\times\Delta_n\right]}}
\newcommand{\gcat}{\ensuremath{\Gamma_{\mathrm{Iso}}((g,n))}}
\newcommand{\OTFT}{\ensuremath{\mathcal{OTFT}}}
\newcommand{\mcs}[1]{\ensuremath{\mathcal{MC}(#1)}}
\newcommand{\mcm}[1]{\ensuremath{\widetilde{\mathcal{MC}}(#1)}}
\newcommand{\gf}{\ensuremath{\mathbb{R}}}
\newcommand{\dilim}[2]{\ensuremath{\varinjlim_{#1} #2}}
\newcommand{\innprod}{\ensuremath{\langle -,- \rangle}}
\newcommand{\cotimes}{\ensuremath{\hat{\otimes}}}
\newcommand{\noproof}{\begin{flushright} \ensuremath{\square} \end{flushright}}
\DeclareMathOperator{\id}{id}
\DeclareMathOperator{\ad}{ad}
\DeclareMathOperator{\tr}{tr}
\DeclareMathOperator{\ch}{ch}
\DeclareMathOperator{\colim}{colim}
\DeclareMathOperator{\im}{Im}
\begin{document}
\title{Classes on the moduli space of Riemann surfaces through a noncommutative Batalin-Vilkovisky formalism}
\author{Alastair Hamilton}
\address{Texas Tech University, Department of Mathematics and Statistics, Lubbock TX 79407-1042. USA.}
\email{hamilton@math.uconn.edu}
\begin{abstract}
Using the machinery of the Batalin-Vilkovisky formalism, we construct cohomology classes on compactifications of the moduli space of Riemann surfaces from the data of a contractible differential graded Frobenius algebra. We describe how evaluating these cohomology classes upon a well-known construction producing homology classes in the moduli space can be expressed in terms of the Feynman diagram expansion of some functional integral. By computing these integrals for specific examples, we are able to demonstrate that this construction produces families of nontrivial classes.
\end{abstract}
\keywords{Moduli space of curves, noncommutative geometry, Batalin-Vilkovisky formalism, Lie algebra cohomology, topological field theory.}
\subjclass[2010]{14H15, 17B56, 17B65, 17B81, 32G15, 32G81, 81T45.}
\maketitle

\section{Introduction}

In this paper we describe a construction that produces cohomology classes on a compactification of the moduli space of Riemann surfaces from the initial data of a contractible differential graded Frobenius algebra. This is achieved by adapting the standard methodology of the Batalin-Vilkovisky formalism to a noncommutative geometric context. Much work has been done on placing the Batalin-Vilkovisky formalism within the framework of noncommutative geometry by Barannikov, see e.g. \cite{baran}. The construction described in this paper originates from the ideas of Kontsevich detailed in \cite{kontfeyn}. In this paper, he described a construction producing cohomology classes on moduli spaces of Riemann surfaces from the same initial data. It was discovered by Chuang-Lazarev in \cite{dualfeyn} that Kontsevich's construction naturally produces classes in a certain compactification of the moduli space, closely related to the compactification that he introduced in \cite{kontairy} in his proof of Witten's conjectures, and they gave in this paper a careful treatment of this construction which extended Kontsevich's original ideas to the framework of modular operads, hence leading to compactifications of the moduli space of Riemann surfaces.

In Kontsevich's original paper \cite{kontfeyn} he described a second construction producing \emph{homology} classes in moduli spaces of Riemann surfaces. It was his proposal that if one were to evaluate one construction upon the other, the answer could be expressed as a functional integral over a finite-dimensional space of fields, using the framework of the Batalin-Vilkovisky formalism. He proposed that by computing the perturbative expansion of such an integral, one could determine whether the corresponding homology and cohomology classes were the nontrivial.

One issue with Kontsevich's original proposal was that this second construction produced homology classes in the one point compactification of the moduli space. However, the construction producing cohomology classes produced these classes in a larger compactification of the moduli space. Hence, it was necessary to describe a way in which these homology classes could be lifted to this larger compactification. This question was addressed and solved in \cite{hamqme} were an obstruction theory controlling the process was laid out. Barannikov's paper \cite{baran} also bears on this point. The key ingredient in the development of this obstruction theory was the extension of a result by Kontsevich \cite{kontsympgeom} showing that the homology of the one point compactification of the moduli space could be recovered as the Chevalley-Eilenberg homology of a certain Lie algebra -- a noncommutative analogue of the Poisson algebra of Hamiltonian vector fields on a symplectic manifold. In \cite{hamcompact} it was shown that the homology of this larger compactification of the moduli space could be recovered as the Chevalley-Eilenberg homology of a certain differential graded Lie algebra, itself an object built using noncommutative geometry.

In this paper, we describe a noncommutative version of the Batalin-Vilkovisky formalism built upon this differential graded Lie algebra. This is achieved relatively simply by relying upon the classical theorems of the Batalin-Vilkovisky formalism and transferring these theorems to our new context. We use this framework to describe a construction producing Chevalley-Eilenberg cocycles of this differential graded Lie algebra, and hence cohomology classes in compactifications of the moduli space by the main theorem of \cite{hamcompact}. This allows us to describe the evaluation of this cohomology class upon the homology class described in \cite{hamcompact} as the perturbative expansion of some functional integral over a finite-dimensional space of fields. The construction of this cohomology class should be equivalent to the one described by Chuang-Lazarev in \cite{dualfeyn} using the alternative perspective of modular operads, however the advantage of our construction is its suitability for performing calculations. We consider, for the first time since the idea was initially proposed by Kontsevich in \cite{kontfeyn}, concrete examples of this construction and demonstrate through such computations that the cohomology classes that are produced are nontrivial.

The construction of these cohomology classes depend upon a contractible differential graded Frobenius algebra for their initial data. It is well-known that Frobenius algebras and (open) two-dimensional topological field theories are the same thing, and we will see that it is these topological field theory axioms that will play the crucial role in ensuring that the constructions that we perform actually produce cocycles and not just cochains. Of course, matrix algebras provide obvious examples of Frobenius algebras, and we will show that these examples lead to matrix integrals similar in form, though not identical, to those considered by Kontsevich in \cite{kontairy}.

The layout of the paper is as follows. In Section \ref{sec_ncgeom} we recall the work of Kontsevich \cite{kontsympgeom} on noncommutative symplectic geometry, as well as introducing the differential graded Lie algebra that is the subject of the main theorem of \cite{hamcompact} relating its Chevalley-Eilenberg homology to that of compactifications of the moduli space of Riemann surfaces. This theorem is recalled at the end of the section. In Section \ref{sec_otft} we recall the definition of an open topological field theory as phrased in the language of modular operads by \cite{dualfeyn} and use this to write a list of identities that will be used in our construction of a cocycle later in the paper. In Section \ref{sec_cbvform} we recall standard theorems from the Batalin-Vilkovisky formalism. In Section \ref{sec_ncbv} we adapt these basic theorems to the framework of noncommutative geometry and use them to define our main construction of a cohomology class on compactifications of the moduli space. In Section \ref{sec_char} we recall basic material about characteristic classes. In Section \ref{sec_pairing} we describe how the evaluation of the cohomology class produced by the construction defined in Section \ref{sec_ncbv} upon the homology class produced by the construction defined in Section \ref{sec_char} can be described in terms of a functional integral over a finite-dimensional space of fields. Finally, in Section \ref{sec_compute} we compute some examples and demonstrate that the classes produced by these constructions are nontrivial. We then describe how these constructions are related to matrix integrals.

\subsection{Notation and conventions}

$\Delta_n$ will denote the closed simplex of dimension $n-1$:
\[ \Delta_n:=\{(x_1,\ldots,x_n)\in [0,1]^{n+1}: \sum_i x_i =1 \}. \]
$\Delta_n^\circ$ will denote the open simplex of dimension $n-1$, which is just the interior of the closed simplex. Given a topological space $X$, the space $X^\infty$ will denote its one point compactification.

Given a $\mathbb{Z}/2\mathbb{Z}$-graded space $V$, the parity reversion $\Pi V$ is the $\mathbb{Z}/2\mathbb{Z}$-graded space
\[ \Pi V_0:= V_1 \qquad \Pi V_1:= V_0. \]
Throughout the paper we work over the ground field $\gf$. We will denote the algebra of Laurent polynomials in $h$ by $\gf[h,h^{-1}]$.

A \emph{symplectic vector space} is a $\mathbb{Z}/2\mathbb{Z}$-graded finite-dimensional vector space $V$ with a nondegenerate \emph{skew-symmetric} bilinear form
\[ \innprod: V\otimes V\to\mathbb{R}. \]
This bilinear form is required to be homogeneous of either even or odd degree. Up to isomorphism, there is only one symplectic vector space of fixed dimension with an \emph{odd} symplectic form. This is the vector space $W^1_{n|n}$ with $n$ even coordinates $x_i\in[W^1_{n|n}]^*$ and $n$ odd coordinates $\xi_i\in[W^1_{n|n}]^*$. The bilinear form is given by
\[ \innprod:=\sum_i x_i\otimes\xi_i - \xi_i\otimes x_i. \]

Over $\mathbb{C}$, there is only one symplectic vector space $W^0_{2n|m}$ of fixed dimension $2n|m$ with an \emph{even} symplectic form. It has $2n$ even coordinates $p_i,q_i\in [W^0_{2n|m}]^*$ and $m$ odd coordinates $\xi_i\in [W^0_{2n|m}]^*$. The bilinear form is given by
\[ \innprod:=\sum_i [p_i\otimes q_i - q_i\otimes p_i] + \sum_i x_i\otimes x_i. \]

Given a nondegenerate bilinear form $\innprod$ on a vector space $V$, there are isomorphisms
\[ D_l:V\to V^* \quad\text{and}\quad D_r:V\to V^* \]
given by the formulae
\[ D_l(u):=[x\mapsto\langle u,x \rangle] \quad\text{and}\quad D_r(u):=[x\mapsto\langle x,u \rangle]. \]
From this we can define the \emph{inverse bilinear form} $\innprod^{-1}$ from the following commutative diagram:
\[ \xymatrix{ & \gf \\ V\otimes V \ar[ur]^{\innprod} \ar[rr]^{D_l \otimes D_r} && V^*\otimes V^* \ar[ul]_{\innprod^{-1}}} \]

There is a one-to-one correspondence between quadratic superfunctions $\sigma\in ([V^*]^{\otimes 2})_{\mathbb{S}_2}$ and \emph{symmetric} bilinear forms $\innprod\in ([V^*]^{\otimes 2})^{\mathbb{S}_2}$ defined by the following map
\begin{displaymath}
\begin{array}{ccc}
\sigma\in ([V^*]^{\otimes 2})_{\mathbb{S}_2} & \to & \innprod\in ([V^*]^{\otimes 2})^{\mathbb{S}_2} \\
x\otimes y & \mapsto & x\otimes y + (-1)^{xy}y\otimes x
\end{array}
\end{displaymath}
For any even symmetric nondegenerate bilinear form $\innprod$, we can always find even coordinates $x_1,\ldots,x_n\in V^*$ and odd coordinates $\xi_1,\ldots,\xi_{2m}\in V^*$ such that $\innprod$ has the form
\[ \innprod=\sum_{i=1}^k x_i\otimes x_i - \sum_{i=k+1}^n x_i\otimes x_i + \sum_{i=1}^m \xi_{2i-1}\otimes\xi_{2i} - \xi_{2i}\otimes\xi_{2i-1}.\]
The quadratic function $\sigma$ corresponding to this form is
\[ \sigma=\frac{1}{2}\sum_{i=1}^k x_i^2 - \frac{1}{2}\sum_{i=k+1}^n x_i^2 + \sum_{i=1}^m \xi_{2i-1}\xi_{2i}. \]

By a differential graded Lie algebra we will mean a vector space $\mathfrak{g}$ equipped with an \emph{odd} bracket
\[ \{-,-\}:\mathfrak{g}\otimes\mathfrak{g}\to\mathfrak{g} \]
and a compatible differential. This means that the bracket $[-,-]$ defined by the equation
\[ \Pi\circ [-,-] = \{-,-\}\circ (\Pi\otimes\Pi) \]
satisfies the Jacobi identity and the Leibniz rule with respect to the differential. Since we deal with the Batalin-Vilkovisky formalism in this paper, we will predominantly deal with odd brackets; however we will also occasionally deal with conventional even brackets as well.

\section{Noncommutative geometry} \label{sec_ncgeom}

In this section we recall the relevant material from \cite{kontsympgeom} regarding the framework of noncommutative symplectic geometry provided by Kontsevich. Throughout the paper, we by and large treat the case of an odd symplectic form, indicating in subsequent remarks any slight differences that arise in the case of an even symplectic form. Since we ultimately intend to perform calculations in this setup, we treat sign rules carefully. We use this framework together with algebraic structures considered in \cite{movshev} and \cite{sched} to define a differential graded Lie algebra depending on two parameters. In \cite{hamcompact} it was shown that the Chevalley-Eilenberg homology of this differential graded Lie algebra coincides with the homology of a certain compactification of the moduli space of Riemann surfaces introduced by Kontsevich in \cite{kontairy}. After defining Chevalley-Eilenberg homology, we recall this theorem at the end of the section. In subsequent sections we will rely on this description of the homology of the moduli space to provide the necessary link between moduli spaces of Riemann surfaces and the framework of the Batalin-Vilkovisky formalism, which will enable us to give a construction of classes in the moduli space using the standard theory that appears naturally in the latter framework.

\subsection{Lie algebras of vector fields}

Let us start by recalling the noncommutative symplectic geometry of Kontsevich. The following object plays the role in noncommutative geometry of the Poisson algebra of Hamiltonian vector fields on a symplectic vector space.

\begin{defi}
Let $(V,\innprod)$ be a symplectic vector space with an \emph{odd} symplectic form. In \cite{kontsympgeom} Kontsevich introduced the structure of a Lie algebra on the space of noncommutative functions
\[\mathfrak{h}[V]:=\bigoplus_{n=0}^\infty [(V^*)^{\otimes n}]_{\mathbb{Z}/n\mathbb{Z}}.\]
The Lie bracket $\{-,-\}$ on $\mathfrak{h}[V]$ is defined by
\begin{equation} \label{eqn_bracket}
\{a_1\cdots a_n,b_1\cdots b_m\} = \sum_{i=1}^n\sum_{j=1}^m (-1)^p \langle a_i,b_j \rangle^{-1} (z_{n-1}^{i-1}\cdot [a_1\cdots \hat{a_i} \cdots a_n]) (z_{m-1}^{j-1}\cdot [b_1\cdots \hat{b_j} \cdots b_m]),
\end{equation}
where $a_1,\ldots,a_n,b_1,\ldots,b_m\in V^*$, $z_n$ is the $n$-cycle $(n \, n-1\, \ldots 2\, 1)$ and
\[ p:=|a_i|(|a_1|+\cdots+|a_{i-1}|) +|b_j|(|a_1|+\cdots+|a_n|+|b_1|+\cdots+|b_{j-1}|). \]
\end{defi}

\begin{rem}
When the symplectic form $\innprod$ is \emph{even}, the space $\mathfrak{h}[V]$ can be endowed with an \emph{even} Lie bracket using the same formula, the only difference being the sign
\[ p:=|a_i|(|a_{i+1}|+\cdots+|a_k|+|b_1|+\cdots+|b_{j-1}|). \]
\end{rem}

It was discovered by Movshev in \cite{movshev} that $\mathfrak{h}[V]$ can be endowed with the extra structure of an (involutive) Lie bialgebra.

\begin{defi}
The (odd) Lie cobracket on $\mathfrak{h}[V]$ is defined by the formula
\begin{equation} \label{eqn_cobracket}
\Delta(a_1\cdots a_n) = \frac{1}{2}\sum_{i<j}(-1)^p\langle a_i,a_j \rangle^{-1}[1+(1 \, 2)]\cdot[ (a_{i+1}\cdots a_{j-1}) \otimes (a_{j+1}\cdots a_n a_1\cdots a_{i-1})],
\end{equation}
where $a_1,\ldots,a_n\in V^*$ and
\begin{displaymath}
\begin{split}
p:= & |a_i|(|a_1|+\cdots+|a_i|) + |a_j|(|a_1|+\cdots+|a_j|) \\
& + (|a_1|+\cdots+|a_{i-1}|)(|a_{i+1}+\cdots+|a_{j-1}|+|a_{j+1}|+\cdots+|a_n|)
\end{split}
\end{displaymath}
\end{defi}

\begin{rem}
Again, when the inner product $\innprod$ is even, the space $\mathfrak{h}[V]$ may be endowed with an even Lie cobracket using the same formula, the only difference being the sign
\[ p:= (|a_1|+\cdots+|a_{i-1}|)(|a_{i+1}|+\cdots+|a_{j-1}|+|a_{j+1}|+\cdots+|a_n|) + |a_j|(|a_{i+1}|+\cdots+|a_{j-1}|) \]
and the fact that $[1+(1 \, 2)]$ gets replaced with $[1-(1 \, 2)]$
\end{rem}

\begin{rem}
This Lie bialgebra $\mathfrak{h}[V]$ has a natural grading determined by the order of a cyclic polynomial. We will denote by
\[ \mathfrak{h}_{\geq k}[V]:=\bigoplus_{n=k}^\infty [(V^*)^{\otimes n}]_{\mathbb{Z}/n\mathbb{Z}} \]
the Lie subalgebra (when $k\geq 2$) of cyclic polynomials of order $\geq k$.
\end{rem}

From this Lie bialgebra $(\mathfrak{h}[V],\{-,-\},\Delta)$ we can construct a differential graded Lie algebra in a canonical way. First, we must introduce the definition of the Chevalley-Eilenberg complex of a Lie algebra.

\subsection{Chevalley-Eilenberg homology}

\begin{defi} \label{def_cecomp}
Let $\mathfrak{g}$ be a differential graded Lie algebra. The Chevalley-Eilenberg complex of $\mathfrak{g}$, denoted by $C_\bullet(\mathfrak{g})$ is the complex with underlying vector space
\[ C_\bullet(\mathfrak{g}):=S(\mathfrak{g})=\bigoplus_{i=0}^\infty [\mathfrak{g}^{\otimes i}]_{\mathbb{S}_i} \]
and with differential $\delta:C(\mathfrak{g})\to C(\mathfrak{g})$ defined by
\begin{equation} \label{eqn_cediff}
\begin{split}
\delta(g_1\cdots g_n):=&\sum_{1\leq i<j\leq n} (-1)^p\{g_i,g_j\}\cdot g_1\cdots \hat{g_i} \cdots \hat{g_j} \cdots g_n \\
& + \sum_{1\leq i\leq n} (-1)^q d(g_i)\cdot g_1\cdots \hat{g_i} \cdots  g_n;
\end{split}
\end{equation}
where $d$ is the differential on $\mathfrak{g}$ and
\begin{displaymath}
\begin{split}
p:= & |g_i|(|g_1|+\cdots+|g_{i-1}|) + |g_j|(|g_1|+\cdots+|g_{j-1}|)+|g_i||g_j| \\
q:= & |g_i|(|g_1|+\cdots+|g_{i-1}|)
\end{split}
\end{displaymath}
The homology of this complex is called the \emph{Chevalley-Eilenberg homology} of $\mathfrak{g}$ with trivial coefficients and is denoted by $H_\bullet(\mathfrak{g})$. The cohomology of this complex, that is the homology of the linearly dual complex, is called the \emph{Chevalley-Eilenberg cohomology} of $\mathfrak{g}$ with trivial coefficients and is denoted by $H^\bullet(\mathfrak{g})$.
\end{defi}

We will also need to define \emph{relative} Chevalley-Eilenberg cohomology as well.

\begin{defi}
Suppose that the differential graded Lie algebra $\mathfrak{g}$ contains $\mathfrak{q}$ as a differential graded Lie sub algebra. Consider the quotient
\[ C_\bullet(\mathfrak{g},\mathfrak{q}):=S(\mathfrak{g}/\mathfrak{q})_{\mathfrak{q}} \]
of $S(\mathfrak{g}/\mathfrak{q})$ by the adjoint action of $\mathfrak{q}$. The differential \eqref{eqn_cediff} descends to this quotient and forms the relative Chevalley-Eilenberg complex of $\mathfrak{g}$ modulo $\mathfrak{q}$. The homology of this complex is referred to as the \emph{relative Chevalley-Eilenberg homology} of $\mathfrak{g}$ modulo $\mathfrak{q}$ and is denoted by $H_\bullet(\mathfrak{g},\mathfrak{q})$. The cohomology of this complex is called \emph{relative Chevalley-Eilenberg cohomology} and denoted by $H^\bullet(\mathfrak{g},\mathfrak{q})$.
\end{defi}

\subsection{Differential graded Lie algebra} \label{sec_dgla}

Now we may describe how to construct our two parameter family of differential graded Lie algebras. We may define a grading on $S(\mathfrak{g})$; we will denote by
\[ S_{\geq k}(\mathfrak{g}):=\bigoplus_{n=k}^\infty [\mathfrak{g}^{\otimes n}]_{\mathbb{S}_n} \]
the subspace of tensors of order $\geq k$. If we take a symplectic vector space $(V,\innprod)$ whose symplectic form is odd and look at the Chevalley-Eilenberg complex $C_\bullet(\mathfrak{h}[V])$ of the associated Lie algebra $\mathfrak{h}[V]$, we may extend the cobracket $\Delta$ on $\mathfrak{h}[V]$ to another differential on $C_\bullet(\mathfrak{h}[V])$ using the Leibniz rule
\[ \Delta(h_1\cdots h_n):=\sum_{i=1}^n (-1)^p\Delta(h_i)\cdot h_1\cdots \hat{h_i} \cdots h_n \]
where $p:=|h_i|(|h_1|+\cdots+|h_{i-1}|)$. Furthermore, we may extend the Lie bracket on $\mathfrak{h}[V]$ to one on $C_\bullet(\mathfrak{h}[V])$ by again using the Leibniz rule
\begin{equation} \label{eqn_CEbracket}
\{g_1\cdots g_n,h_1\cdots h_m\}=\sum_{i=1}^n\sum_{j=1}^m (-1)^p\{g_i,h_j\}\cdot g_1\cdots \hat{g_i} \cdots g_n\cdot h_1\cdots \hat{h_j} \cdots h_m
\end{equation}
where
\[ p:=|g_i|(|g_1|+\cdots+|g_{i-1}|) + |h_j|(|g_1|+\cdots+|g_n|+|h_1|+\cdots+|h_{j-1}|) + |g_i||h_j| \]

This allows us to consider the following object. Define
\[ \mathfrak{l}:=\gf[\gamma]\otimes C_\bullet(\mathfrak{h}[V]) \]
and equip it with the bracket naturally arising from the bracket on $\mathfrak{h}[V]$ and the differential $d$ defined by the formula
\[ d:=\gamma\cdot\delta+\Delta. \]

\begin{lemma}
$\mathfrak{l}$ is a differential graded Lie algebra.
\end{lemma}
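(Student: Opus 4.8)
The plan is to recognise the pair $(\delta,\Delta)$ as defining, together with the graded-commutative product on $S(\mathfrak{h}[V])$, a noncommutative analogue of a Batalin--Vilkovisky structure, and then to read off the dg Lie algebra axioms from the classical theorems recalled earlier. Throughout, $\gamma$ is treated as a central, even indeterminate, so that every identity to be verified is $\gf[\gamma]$-linear and it suffices to work on $C_\bullet(\mathfrak{h}[V])=S(\mathfrak{h}[V])$.

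First I would record the formal properties of the two operators. The extended differential $\delta$ is an odd, order $\leq 2$ operator on the graded-commutative algebra $S(\mathfrak{h}[V])$ with $\delta(1)=0$, and since $\mathfrak{h}[V]$ carries no internal differential its polarization is exactly the Leibniz-extended bracket \eqref{eqn_CEbracket}. The identity $\delta^2=0$ is equivalent to the Jacobi identity for $\{-,-\}$ on $\mathfrak{h}[V]$, hence holds. By the classical theorem of the Batalin--Vilkovisky formalism recalled in Section \ref{sec_cbvform}, a square-zero order $\leq 2$ operator annihilating the unit generates a Gerstenhaber bracket; this gives for free that \eqref{eqn_CEbracket} is a genuine odd Lie bracket (graded antisymmetry, Jacobi, biderivation) and that $\delta$ is a derivation of it. The extended cobracket $\Delta$, by contrast, is an odd derivation of the product, so $\Delta^2$ is again a derivation and vanishes as soon as it vanishes on generators; on a generator $\Delta^2$ reduces to the co-Jacobi identity, which holds because $\mathfrak{h}[V]$ is a Lie coalgebra.

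The crux is the compatibility $\delta\Delta+\Delta\delta=0$. I would exploit the fact that $\delta$ has order $\leq 2$ and $\Delta$ has order $\leq 1$, so their graded commutator has order $\leq 2$ and is therefore determined by its values on the unit and on products of at most two generators. On the unit it vanishes trivially. On a single generator $h\in\mathfrak{h}[V]$ one has $\delta(h)=0$, while $\delta\Delta(h)$ is precisely $\{-,-\}\circ\Delta(h)$; this is exactly the involutivity of the Lie bialgebra structure of Movshev, so it vanishes. On a product $gh$ of two generators the commutator unwinds into the terms that express the Lie bialgebra compatibility (the cocycle condition relating $\{-,-\}$ and $\Delta$) together with the involutivity terms already accounted for. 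Verifying this two-generator identity, keeping track of the signs dictated by the conventions fixed in Section \ref{sec_ncgeom}, is the one genuinely computational step and is where I expect the real work --- and the main obstacle --- to lie.

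With these three facts in hand the conclusion is immediate. Since $\gamma$ is central and even, $d=\gamma\delta+\Delta$ is odd and
\[ d^2=\gamma^2\,\delta^2+\gamma\,(\delta\Delta+\Delta\delta)+\Delta^2=0. \]
Both $\delta$ and $\Delta$ are derivations of the bracket \eqref{eqn_CEbracket}: for $\delta$ this is part of the Batalin--Vilkovisky package above, and for $\Delta$ it follows from $\Delta$ being a derivation of the product together with $\delta\Delta+\Delta\delta=0$, via the polarization formula expressing the bracket through $\delta$. Hence $d$ is a derivation of the bracket, and $(\mathfrak{l},\{-,-\},d)$ satisfies the axioms of a differential graded Lie algebra in the sense fixed in the introduction.
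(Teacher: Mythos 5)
Your argument is correct and is essentially the ``standard construction'' the paper itself invokes (its proof is just a citation to \cite{hamcompact}): one builds the dg Lie algebra on $\gf[\gamma]\otimes S(\mathfrak{h}[V])$ from the involutive Lie bialgebra structure on $\mathfrak{h}[V]$, with $\delta^2=0$ coming from Jacobi, $\Delta^2=0$ from co-Jacobi, and $\delta\Delta+\Delta\delta=0$ from involutivity plus the bialgebra cocycle condition, exactly as you reduce it. The two-generator check you defer is precisely that cocycle condition, which the paper already grants via Movshev's result, so nothing essential is missing.
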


\begin{proof}
This is a standard construction, see e.g. \cite{hamcompact} for details.
\end{proof}

To get our desired differential graded Lie algebras, we will modify $\mathfrak{l}$ in an appropriate way. First note that
\[ \mathfrak{h}[V]=\gf\oplus\mathfrak{h}_{\geq 1}[V]. \]
By identifying the symmetric algebra on the ground field $\gf$ with the free polynomial algebra in one variable $\nu$ we can write
\[ \mathfrak{l}=\gf[\gamma,\nu]\otimes S(\mathfrak{h}_{\geq 1}[V]). \]
Since the summand $\gf[\gamma,\nu]$ is a differential graded ideal of $\mathfrak{l}$, the subspace
\[ \mathfrak{l}':=\gf[\gamma,\nu]\otimes S_{\geq 1}(\mathfrak{h}_{\geq 1}[V]) \]
inherits the structure of a differential graded Lie algebra. From this differential graded Lie algebra, we pick the differential graded Lie subalgebra $\Lambda_{\gamma,\nu}[V]$ obtained by throwing out the summand corresponding to $V^*$; that is to say that $\mathfrak{l}'$ has a grading in which the deformation parameter $\gamma$ has order 2, the parameter $\nu$ has order 1 and a cyclic word in $\mathfrak{h}[V]$ of length $i$ has order $i$, and we take $\Lambda_{\gamma,\nu}[V]$ to be the Lie subalgebra generated by terms of order $\geq 2$. We define $\Lambda_\gamma[V]$ to be the subspace of $\gf[\gamma]\otimes S_{\geq 1}(\mathfrak{h}_{\geq 1}[V])$ generated by terms of order $\geq 2$. It inherits the structure of a differential graded Lie algebra by stipulating that the natural projection $\Lambda_{\gamma,\nu}[V]\to\Lambda_{\gamma}[V]$ determined by setting the deformation parameter $\nu=0$ is a map of differential graded Lie algebras.

\begin{rem}
If the symplectic form $\innprod$ is even, we proceed slightly differently. In this case $\mathfrak{h}[V]$ has an \emph{even} bracket, hence the parity reversion $\Pi\mathfrak{h}[V]$ has an odd bracket. We can repeat the preceding construction on $\Pi\mathfrak{h}[V]$ to form a differential graded Lie algebra
\[ \mathfrak{l}:=\gf[\gamma]\otimes S(\Pi\mathfrak{h}[V])=\gf[\gamma,\nu]\otimes S(\Pi\mathfrak{h}_{\geq 1}[V]) \]
where the deformation parameter $\nu$ is now \emph{odd}. The symmetric algebra $S(\Pi\mathfrak{h}_{\geq 1}[V])$ can be identified with the graded exterior algebra $\Lambda(\mathfrak{h}_{\geq 1}[V])$ in the usual way using the Koszul sign rule. The differential graded Lie algebras $\Lambda_{\gamma,\nu}[V]$ and $\Lambda_{\gamma}[V]$ are formed in the same way as before.
\end{rem}

\subsection{Relationship to moduli space}

We now describe the relationship between the differential graded Lie algebras constructed in the previous section and compactifications of the moduli space of Riemann surfaces. These compactifications were introduced by Kontsevich in \cite{kontairy} in his proof of Witten's conjectures and by Looijenga in a subsequent paper \cite{looi} which extended and clarified Kontsevich's work. In order to connect the material of the previous section with moduli spaces of Riemann surfaces, we will need to recall how to define a Hopf algebra structure on the stable Chevalley-Eilenberg homology of the differential graded Lie algebras that we have just constructed.

Given a symplectic vector space $(V,\innprod)$, we may consider the Lie subalgebra of $\mathfrak{h}_{\geq 2}[V]$ consisting of \emph{quadratic} superfunctions. This Lie subalgebra may be identified with the Lie algebra of endomorphisms of $V$ which annihilate the symplectic form $\innprod$. These Lie algebras have special names; when the symplectic form $\innprod$ is odd it is denoted by $\mathfrak{pe}[V]$, when $\innprod$ is even it is denoted by $\mathfrak{osp}[V]$. One can check that this Lie subalgebra is also a differential graded Lie subalgebra of both $\Lambda_{\gamma,\nu}[V]$ and $\Lambda_{\gamma}[V]$.

Now let us define stable versions of the objects that we have defined. Consider the canonical symplectic vector spaces $W^1_{n|n}$ and $W^0_{2n|m}$. We make the definitions:
\begin{displaymath}
\begin{split}
\Lambda_{\gamma,\nu}^1:= \dilim{n}{\Lambda_{\gamma,\nu}[W^1_{n|n}]} & \qquad \Lambda_{\gamma}^1:= \dilim{n}{\Lambda_{\gamma}[W^1_{n|n}]} \\
\mathfrak{h}^1:= \dilim{n}{\mathfrak{h}[W^1_{n|n}]} & \qquad \mathfrak{pe}:= \dilim{n}{\mathfrak{pe}[W^1_{n|n}]} \\
\Lambda_{\gamma,\nu}^0:= \dilim{n,m}{\Lambda_{\gamma,\nu}[W^0_{2n|m}]} & \qquad \Lambda_{\gamma}^0:= \dilim{n,m}{\Lambda_{\gamma}[W^0_{2n|m}]} \\
\mathfrak{h}^0:= \dilim{n,m}{\mathfrak{h}[W^0_{2n|m}]} & \qquad \mathfrak{osp}:= \dilim{n,m}{\mathfrak{osp}[W^0_{2n|m}]} \\
\end{split}
\end{displaymath}

Consider the relative mod $\mathfrak{pe}$ Chevalley-Eilenberg complex of the Lie algebra $\mathfrak{h}^1$. This complex has a natural commutative multiplication derived from the morphism of Lie algebras
\[ \mathfrak{h}[W^1_{n|n}] \oplus \mathfrak{h}[W^1_{m|m}] \to \mathfrak{h}[W^1_{n+m|n+m}]. \]
Combining this multiplication with the canonical coproduct on this complex endows the relative Chevalley-Eilenberg complex with the structure of a commutative cocommutative differential graded Hopf algebra. Likewise, the relative mod $\mathfrak{pe}$ Chevalley-Eilenberg complexes of $\Lambda_{\gamma,\nu}^1$ and $\Lambda_{\gamma}^1$ are similarly endowed with such a structure. The Milnor-Moore theorem \cite{mmthm} implies that these Hopf algebras are polynomial algebras in their primitive elements. Precisely the same remarks apply to the relative mod $\mathfrak{osp}$ Chevalley-Eilenberg complexes of $\Lambda_{\gamma,\nu}^0$, $\Lambda_{\gamma}^0$ and $\mathfrak{h}^0$.

The relative Chevalley-Eilenberg complexes of the differential graded Lie algebras that we have constructed provide an alternative description of complexes of cellular chains on certain compactifications of the moduli space of curves. These compactifications are constructed as quotients of the well-known Deligne-Mumford compactification $\dmcmp$.

There are two compactifications of interest. The first compactification, which we will refer to as the \emph{Kontsevich compactification}, is the quotient of the Deligne-Mumford compactification $\dmcmp$ that we obtain by forgetting the complex structure on those irreducible components which contain no marked points and remembering only their topological type. More precisely, we say that two stable curves in $\dmcmp$ are equivalent if, when we contract those irreducible components with no marked points and label the resulting nodal singularities by their (arithmetic) genus, there is a biholomorphic mapping between the resulting surfaces which preserves the labeling by the genus at the nodes; see e.g. \cite{mondello} or \cite{zvonkine} for details. The quotient of $\dmcmp$ by this equivalence relation will be called the Kontsevich compactification and will be denoted by $\kcmp$.

The second compactification will be referred to as the \emph{Looijenga compactification}. In this compactification, we consider \emph{decorated} stable curves in which each marked point is decorated by a nonnegative real number, called a \emph{perimeter}, which is allowed to vanish. We say that two decorated stable curves in $\dmcmp\times\Delta_n$ are equivalent if, when we contract those irreducible components which only contain points with vanishing perimeters and label the resulting nodal singularities by their genus and number of marked points, there is a biholomorphic mapping between the resulting surfaces preserving the labels at the nodes. The quotient of $\dmcmp\times\Delta_n$ by this equivalence relation will be called the Looijenga compactification and denoted by $\lcmp$. Note that both $\kcmp$ and $\lcmp$ have finite dimensional homology.

In what follows we will consider the moduli space with \emph{unlabeled} marked points, in which we take the quotient of the above spaces by the action of the symmetric group $\mathbb{S}_n$. The following theorem was proven in \cite{hamcompact}:

\begin{theorem} \label{thm_modspc}
Let us use the prefix $\mathcal{P}$ to denote primitive homology. There are isomorphisms in homology:
\begin{displaymath}
\begin{split}
\mathcal{P}H_\bullet(\Lambda_{\gamma,\nu}^1,\mathfrak{pe}) & \cong \bigoplus_{\chi=-1}^{-\infty} \left[\bigoplus_{\begin{subarray}{c} g\geq 0, \ n\geq 1: \\ \chi=2-2g-n  \end{subarray}} H_\bullet(\lcmp/\mathbb{S}_n) \right] \\
\mathcal{P}H_\bullet(\Lambda_{\gamma}^1,\mathfrak{pe}) & \cong \bigoplus_{\chi=-1}^{-\infty} \left[\bigoplus_{\begin{subarray}{c} g\geq 0, \ n\geq 1: \\ \chi=2-2g-n  \end{subarray}} H_\bullet([\kcmp\times\Delta_n^\circ]^\infty/\mathbb{S}_n) \right]
\end{split}
\end{displaymath}
\end{theorem}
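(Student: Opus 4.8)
The plan is to reduce the computation of the primitive Chevalley-Eilenberg homology to a graph-theoretic model via classical invariant theory, and then to identify the resulting graph complex with a complex of cellular chains on the appropriate compactification. The starting point is the observation, going back to Kontsevich's work in \cite{kontsympgeom}, that the relevant Lie algebras are built from cyclic words, so that passing to the stable limit and extracting the $\mathfrak{pe}$-invariants (respectively $\mathfrak{osp}$-invariants) allows one to apply the fundamental theorem of invariant theory for the periplectic (respectively orthosymplectic) family. In the stable limit the invariants of tensor powers of $V^*$ are spanned by complete pairings using the inverse form $\innprod^{-1}$; each such pairing glues the half-edges emanating from the cyclic vertices into the edges of a ribbon graph, the cyclic ordering at each vertex supplying the fatgraph structure. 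Thus the first step is to make this identification precise and to show that the relative Chevalley-Eilenberg complex of $\mathfrak{h}^1$ modulo $\mathfrak{pe}$, and its deformations $\Lambda_{\gamma,\nu}^1$ and $\Lambda_\gamma^1$, is quasi-isomorphic to a complex spanned by isomorphism classes of ribbon graphs, with the two parameters $\gamma$ and $\nu$ recording, respectively, the order of the Chevalley-Eilenberg term and the combinatorial data governing connected components.

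The second step is to match the differentials. The Chevalley-Eilenberg differential $\delta$, carried by $\gamma$, together with the extension of Movshev's cobracket $\Delta$, must be shown to correspond to the boundary operator on the ribbon graph complex: the bracket terms and the cobracket terms assemble into the operations of edge contraction and the splitting of a graph along a separating edge, which are exactly the incidence relations governing how the open cells of metric ribbon graphs fit together in the cellular stratification of decorated moduli space. It is precisely the cobracket contribution that produces the additional degenerate strata distinguishing these larger compactifications from Kontsevich's one-point compactification. Here the careful sign conventions of the preliminary sections are essential, and I would verify the required identities by direct comparison of \eqref{eqn_bracket}, \eqref{eqn_cobracket} and \eqref{eqn_cediff} against the combinatorial boundary maps. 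Once the complex is identified with cellular chains, the cell decomposition of decorated moduli space by metric ribbon graphs---due to Penner, Harer--Mumford--Thurston and Kontsevich---identifies its homology with the homology of the corresponding compactification.

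The third step is to account for the two compactifications and the stabilization. The deformation parameter $\nu$ controls whether the perimeter data ranges over the closed simplex $\Delta_n$ or is confined to its interior $\Delta_n^\circ$: retaining $\nu$ retains the boundary strata arising from vanishing perimeters and hence yields the Looijenga compactification $\lcmp$, whereas the projection $\nu=0$ collapses these strata into a point and produces the one-point compactification $[\kcmp\times\Delta_n^\circ]^\infty$, in exact parallel with the two lines of the statement. Finally, the commutative cocommutative Hopf-algebra structure described before the theorem, together with the Milnor--Moore theorem \cite{mmthm}, reduces the full homology to its primitive part; connected ribbon graphs correspond to connected surfaces, so the primitives decompose as a direct sum over topological types, organized first by Euler characteristic $\chi=2-2g-n$ and then by genus $g$ and number of marked points $n$, which is precisely the indexing on the right-hand side.

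The main obstacle is the second step: establishing that the algebraically defined differential $d=\gamma\cdot\delta+\Delta$ matches the geometric boundary operator on the ribbon graph complex, with all signs, and that this match persists in the stable limit and survives passage to the $\mathfrak{pe}$-relative (respectively $\mathfrak{osp}$-relative) quotient. The invariant-theoretic identification is delicate in the super setting, since one must invoke the correct invariant theory for the periplectic Lie superalgebra rather than that of the ordinary symplectic group, and faithfully reproducing the orbifold cellular structure of the Kontsevich and Looijenga compactifications---including the contraction of components carrying only vanishing perimeters and the correct handling of automorphisms via the $\mathbb{Z}/n\mathbb{Z}$- and $\mathbb{S}_n$-coinvariants---requires careful bookkeeping of the combinatorial strata.
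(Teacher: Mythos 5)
The paper does not prove this theorem at all: it is imported verbatim from \cite{hamcompact}, and the statement here is followed only by a citation, so there is no in-paper argument to compare yours against. That said, your outline is essentially the strategy of the cited proof (and of its antecedents in \cite{kontsympgeom} and \cite{lazvor}): stable invariant theory for $\mathfrak{pe}$ reduces the relative Chevalley--Eilenberg complex to a ribbon graph complex, the differential $d=\gamma\cdot\delta+\Delta$ is matched with the combinatorial boundary operator, the ribbon-graph cell decomposition of decorated moduli space identifies that complex with cellular chains on the compactification, and Milnor--Moore reduces to primitives, i.e.\ connected graphs and connected surfaces. Two small imprecisions: the cobracket term corresponds to contracting an edge whose two half-edges lie at the \emph{same} vertex (a loop), not to ``splitting along a separating edge''; and the parameter $\nu$ records empty cyclic words, i.e.\ boundary components carrying no marked points (equivalently the vanishing-perimeter strata), rather than ``connected components.'' You correctly identify that the substance of the proof lies in the sign-accurate matching of differentials and in the super invariant theory, neither of which your sketch carries out, so this should be read as a correct plan rather than a proof.
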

\noproof

\begin{rem}
There is a similar theorem for the differential graded Lie algebras defined from an even symplectic form. If we replace the differential graded Lie algebras $\Lambda_{\gamma,\nu}^1$ and $\Lambda_{\gamma}^1$ with the differential graded Lie algebras $\Lambda_{\gamma,\nu}^0$ and $\Lambda_{\gamma}^0$ and replace $\mathfrak{pe}$ with $\mathfrak{osp}$, then the above theorem still holds, except that on the right we must consider the homology of the moduli space \emph{with twisted coefficients} (cf. Theorem 4.1 of \cite{lazvor}).
\end{rem}

\section{Topological field theories} \label{sec_otft}

In this section we recall a reformulation, provided by \cite{dualfeyn} using the language of modular operads, of the (well-known) definition of a topological field theory as originally formulated by Atiyah-Segal and others. This will be relevant to our construction of a cocycle on the moduli space. The natural axioms of an open topological field theory will be translated into a set of identities for a family of tensors. These identities, it will later be shown, will imply that the constructions we carry out in the subsequent parts of the paper actually produce \emph{cocycles} on the moduli space.

\subsection{Modular operads}

We begin by recalling the notion of modular operad due to Getzler-Kapranov \cite{getkap}, starting with the definition of a stable $\mathbb{S}$-module.

\begin{defi}
A stable $\mathbb{S}$-module is a collection of vector spaces, or more generally, chain complexes
\[\mathcal{V}((g,n))\]
defined for $g,n\geq 0$ such that $2g+n-2>0$ and equipped with an action of $\mathbb{S}_n$ on each $\mathcal{V}((g,n))$. Morphisms of $\mathbb{S}$-modules are just equivariant maps respecting the grading by $g$ and $n$.

Given a finite set $I$ we define
\[ \mathcal{V}((g,I)):=\left[\bigoplus_{\begin{subarray}{c} \text{bijections} \\ \{1,\ldots,n\}\to I \end{subarray}} \mathcal{V}((g,n))\right]_{\mathbb{S}_n} \]
where $\mathbb{S}_n$ acts on $\mathcal{V}((g,n))$ and by permuting summands.
\end{defi}

In order to define the notion of modular operad we will need to introduce graphs.

\begin{defi}
A stable graph (with legs) is a set $G$, called the set of \emph{half-edges}, together with the following data:
\begin{enumerate}
\item
A disjoint collection of pairs of elements of $G$, denoted by $E(G)$, called the set of \emph{edges} of $G$. Those half-edges which are not part of an edge are called the \emph{legs} of $G$.
\item
A partition of $G$, denoted by $V(G)$, called the set of \emph{vertices} of $G$. We will refer to the cardinality $n(v)$ of a vertex $v\in V(G)$ as the \emph{valency} of $v$.
\item
For every vertex $v\in V(G)$, a nonnegative integer $g(v)$ called the \emph{genus} of $v$. We impose the condition that $2g(v)+n(v)-2$ must be positive at every vertex $v\in V(G)$.
\end{enumerate}
In addition, a stable graph $G$ must be \emph{connected}.
\end{defi}

We define the \emph{genus} of a stable graph $G$ by the formula
\[ g(G):=b_1(G)+\sum_{v\in V(G)} g(v), \]
where $b_1(G)$ refers to the first Betti-number of the geometric realization of $G$. We define the category $\gcat$ to be the category whose objects are stable graphs of genus $g$ with $n$ legs labeled from $1$ to $n$ and whose morphisms are isomorphisms of stable graphs preserving the labeling of the legs.

For any stable graph $G$ and $\mathbb{S}$-module $\mathcal{V}((g,n))$, we define
\[ \mathcal{V}((G)):= \bigotimes_{v\in V(G)}\mathcal{V}((g(v),v)). \]
This allows us to define an endofunctor $\mathbb{M}$ on stable $\mathbb{S}$-modules by
\[ \mathbb{M}\mathcal{V}((g,n)):=\underset{G\in\text{Iso}\Gamma((g,n))}{\colim}\mathcal{V}((G)). \]
From this we can construct a triple $(\mathbb{M},\mu,\eta)$ \cite{getkap}. The natural transformation $\mu:\mathbb{M}\mathbb{M}\to\mathbb{M}$ is given by gluing the legs of the stable graphs, located at the vertices of some parent stable graph of which they are all subgraphs, along the edges of that parent stable graph. The natural transformation $\eta:\id\to\mathbb{M}$ is the map which associates to a $\mathbb{S}$-module $\mathcal{V}((g,n))$, the corolla whose single vertex is decorated by that $\mathbb{S}$-module.

\begin{defi} \label{def_modop}
A modular operad is an algebra over the triple $(\mathbb{M},\mu,\eta)$. A morphism of modular operads is just a morphism of such algebras. These maps are required to commute with the differentials.
\end{defi}

A fundamental example of a modular operad is provided by the \emph{endomorphism modular operad}.

\begin{defi}
Let $V$ be a finite-dimensional complex with a symmetric, even inner product $\innprod$ such that
\[ \langle d(x),y \rangle + (-1)^x\langle x,d(y) \rangle = 0. \]
The endomorphism modular operad of $V$, denoted by $\mathcal{E}[V]$, is the modular operad whose underlying $\mathbb{S}$-module is $\mathcal{E}[V]((g,n)):=(V^*)^{\otimes n}$. The structure map
\[ \mathbb{M}\mathcal{E}[V]\to\mathcal{E}[V] \]
is defined by contracting the tensors in $\mathcal{E}[V]$ along the edges of the graph using the inverse inner product $\innprod^{-1}$.
\end{defi}

This allows the following fundamental definition.

\begin{defi}
An \emph{algebra} over a modular operad $\mathcal{A}$ is a vector space $V$ together with a morphism of modular operads $\mathcal{A}\to\mathcal{E}[V]$.
\end{defi}

\subsection{Topological field theory}

We wish to cast the notion of a topological field theory within this framework (cf. \cite{dualfeyn}).

\begin{defi}
Given integers $\lambda\geq 0$ and $\nu, n \geq 1$, let $M_{\lambda,\nu,n}$ denote the category of connected compact oriented topological surfaces of genus $\lambda$ with $\nu$ boundary components and $n$ labeled intervals embedded in the boundary. That is an object in $M_{\lambda,\nu,n}$ is a connected compact oriented surface $S$ of genus $\lambda$ with $\nu$ boundary components, together with the data of $n$ orientation preserving embeddings $f_i:[0,1]\to\partial S$ for $1\leq i\leq n$.

\begin{figure}[htp]
\centering
\includegraphics{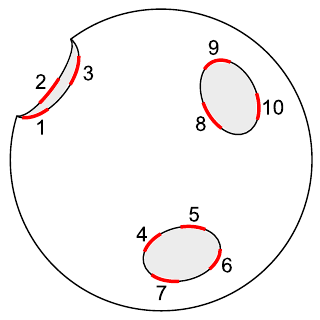}
\caption{Objects in $M_{\lambda,\nu,n}$ are surfaces with parameterized labeled intervals embedded in the boundary.}
\end{figure}

A morphism in $M_{\lambda,\nu,n}$ is just a morphism of topological spaces preserving the orientation and the labeled embedded intervals. We denote the set of isomorphism classes\footnote{This makes sense since there are only a finite number of such isomorphism classes.} by $[M_{\lambda,\nu,n}]$.
\end{defi}

\begin{defi}
The modular operad $\OTFT$ is defined as follows: its underlying $\mathbb{S}$-module is
\[ \OTFT((g,n)):=\bigoplus_{\begin{subarray}{c} \lambda\geq 0, \ \nu\geq 1: \\ 2\lambda+\nu-1=g \\  \end{subarray}} \gf[M_{\lambda,\nu,n}], \]
where $\mathbb{S}_n$ acts by relabeling the intervals embedded in the boundary. The structure map
\[ \mathbb{M} \OTFT \to \OTFT \]
of the modular operad is given by gluing surfaces along the embedded intervals on the boundary using the structure of the graph.
\end{defi}

\begin{rem}
Note, the genus $g$ in the modular operad $\OTFT$ does not correspond to the genus $\lambda$ of the surface, but to the quantity $2\lambda+\nu-1$, which is the first Betti number of the surface.
\end{rem}

An algebra over the modular operad $\OTFT$ is called an \emph{open topological field theory}. It is just a way to assign multilinear operations $V^{\otimes n}\to\gf$ to surfaces in a way that depends only on the topology of that surface and that is compatible with the various possible ways in which such topological surfaces may be glued together. This just amounts to a reformulation of the classic axioms first described by Atiyah-Segal et. al. in terms of modular operads. In this context, there is a classic theorem which says that an open topological field theory is simply a differential graded Frobenius algebra. One may consult \cite{dualfeyn} for a proof in this context.

\begin{theorem}
The datum of an open topological field theory consists of nothing more than a differential graded Frobenius algebra.
\end{theorem}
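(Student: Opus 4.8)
The plan is to establish the equivalence between algebras over the modular operad $\OTFT$ and differential graded Frobenius algebras by constructing maps in both directions and verifying they are mutually inverse. The key observation is that, by the defining axioms, an algebra over $\OTFT$ is a morphism of modular operads $\OTFT\to\mathcal{E}[V]$, which by adjunction amounts to assigning to each isomorphism class of surface $S\in[M_{\lambda,\nu,n}]$ a multilinear operation $V^{\otimes n}\to\gf$ (equivalently an element of $(V^*)^{\otimes n}$) in an $\mathbb{S}_n$-equivariant manner, subject to compatibility with the gluing structure maps. Since every surface in $M_{\lambda,\nu,n}$ is built by gluing together elementary pieces along their embedded boundary intervals, the entire theory is determined by its values on a small set of generating surfaces, and the structure-map compatibility forces these generators to satisfy exactly the Frobenius algebra axioms.

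First I would identify the generators of the modular operad $\OTFT$. The surfaces that cannot be decomposed further via the gluing operation are: the disk with two embedded intervals (genus $0$, one boundary component, $n=2$), which will encode a bilinear form; the disk with three embedded intervals (genus $0$, one boundary, $n=3$), which will encode the multiplication $V\otimes V\to V$ after using the form to raise an index; and the disk with a single interval (the unit). I would then read off from the structure maps of $\OTFT$ how these generators must interact: gluing two trivalent disks along a boundary interval in the two topologically distinct ways yields the same surface, and this single relation encodes \emph{associativity} of the induced product. The equivariance under $\mathbb{S}_n$ applied to the bilinear form piece yields its \emph{symmetry} (equivalently, invariance $\langle ab,c\rangle=\langle a,bc\rangle$), while compatibility of the unit disk with the multiplication disk under gluing gives the \emph{unit axiom}. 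The condition $\langle d(x),y\rangle+(-1)^x\langle x,d(y)\rangle=0$ built into $\mathcal{E}[V]$, together with the requirement that operad morphisms commute with differentials, yields that $d$ is a derivation compatible with the form, giving the \emph{differential graded} structure.

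Conversely, given a differential graded Frobenius algebra $(V,\cdot,1,\langle-,-\rangle,d)$, I would construct the morphism $\OTFT\to\mathcal{E}[V]$ by assigning to each surface the tensor obtained from its pair-of-pants/disk decomposition: choose any decomposition of the surface into generating pieces, contract the corresponding copies of the multiplication and the form along the gluing pattern using $\innprod^{-1}$, and take the resulting element of $(V^*)^{\otimes n}$. The central point requiring verification is that this assignment is \emph{well-defined}, i.e.\ independent of the chosen decomposition; this is precisely where I expect the main obstacle to lie. The well-definedness reduces to checking that any two decompositions of the same surface are related by a sequence of elementary moves, and that each such move corresponds to one of the Frobenius axioms (associativity, symmetry of the form, the unit relation, and the Frobenius compatibility between product and form). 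This is a classical fact about the generators-and-relations presentation of the open two-dimensional cobordism category, and I would invoke the standard presentation theorem for this category (for which I would cite \cite{dualfeyn}) rather than reprove the combinatorial normal-form result from scratch; the careful bookkeeping of Koszul signs arising from the $\mathbb{Z}/2\mathbb{Z}$-grading and the differential is the only genuinely new ingredient in the graded setting, and verifying that these signs are consistent across the elementary moves is the step I would treat most carefully.
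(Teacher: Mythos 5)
First, note that the paper does not actually prove this statement: it is recorded with a reference to \cite{dualfeyn}, so your argument is effectively being compared against the proof there rather than against anything in this text. Your overall strategy --- present the open two-dimensional cobordism structure by generators and relations, read the Frobenius axioms off the relations, and in the converse direction check that the tensor assigned to a surface is independent of the chosen decomposition via elementary moves --- is the standard route and is essentially what the cited source carries out.

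There is, however, one concrete step that fails as written. The modular operad $\OTFT$ is built on a \emph{stable} $\mathbb{S}$-module: $\OTFT((g,n))$ is only defined when $2g+n-2>0$, so the disk with a single embedded interval ($g=0$, $n=1$) and the disk with two embedded intervals ($g=0$, $n=2$) are not objects of the operad at all. Consequently you cannot take these as generators, you cannot obtain the bilinear form as the value of the theory on the $2$-interval disk, and the sentence about ``compatibility of the unit disk with the multiplication disk'' has no meaning inside $\OTFT$. In this framework the symmetric inner product is \emph{part of the data} of the target $\mathcal{E}[V]$ (it is what the structure maps of $\mathcal{E}[V]$ contract along edges of graphs); the true generator is the $3$-interval disk, whose image together with $\innprod$ defines the product, while associativity and the invariance $\langle ab,c\rangle=\langle a,bc\rangle$ come from the two gluing decompositions of the $4$-interval disk and the cyclic symmetry of the $3$-interval disk, respectively. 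The existence of a unit must then be argued separately, using nondegeneracy of the given form and the relations imposed by self-gluings, or else the statement must be read as producing a possibly non-unital Frobenius algebra; either way this is a gap you need to close. The remainder of your argument --- that disks with $n\ge 3$ intervals generate all of $M_{\lambda,\nu,n}$ under gluing and self-gluing, and that well-definedness reduces to elementary moves whose Koszul-sign bookkeeping must be verified in the graded setting --- is sound and consistent with the approach of \cite{dualfeyn}.
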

\noproof

We will explain the content of this theorem by describing precisely how a Frobenius algebra gives rise to an open topological field theory. Given any differential graded Frobenius algebra $A$, we can associate a collection of tensors
\begin{equation} \label{eqn_tensor}
\alpha_n^{g,b} \in T(A^*)^{\otimes n}
\end{equation}
for every $n, g, b\geq 0$. The components of this tensor
\[ \alpha_n^{g,b}|_{[k_1,\ldots,k_n]}:A^{\otimes k_1}\otimes\ldots\otimes A^{\otimes k_n}\to\gf \]
are defined as follows. Take a topological surface $S$ of genus $g$ with $n+b$ boundary components. Select $n$ of these boundary components and label them from 1 to $n$. Place $k_i$ intervals along the $i$th boundary component and use the orientation on the boundary coming from the orientation on the surface to cyclically order these boundary intervals. Picking representatives of these cyclic orderings provides a way to label all the intervals from 1 to $K:=\sum_{i=1}^n k_i$. The tensor \eqref{eqn_tensor} is just the tensor that is associated to the surface $S\in M_{g,n,K}$ by the open topological field theory arising from the differential graded Frobenius algebra $A$.

This tensor is well-defined and does not depend on the various choices for labeling the intervals that we made above, as we will now explain. Firstly, suppose that we choose a different labeling of the boundary components. Using an orientation preserving diffeomorphism, we can permute any two boundary components of the surface. This means that any two surfaces in $M_{g,n,K}$ that arise from choosing different labels for the boundary components will be isomorphic, and hence give rise to the same tensor $\alpha_n^{g,b}$. Similarly, suppose that we choose a different representative for the cyclic ordering of the intervals on one boundary component. By using a diffeomorphism of the surface that rotates that boundary component, we can obtain any representative for that cyclic order of the intervals on that boundary component that we wish. In this way, we see that the definition of the tensor $\alpha_n^{g,b}$ does not depend on the choice of this representative.

Let us write down a concrete formula for this tensor. We may write the inverse inner product as $\innprod^{-1}=x_i\otimes y^i$, where the repeated index implies a sum. Now a formula for the tensor $\alpha_n^{0,0}$ is
\begin{multline} \label{eqn_tensorzero}
\alpha_n^{0,0}[(a_{11}\cdots a_{1k_1})\otimes\cdots\otimes(a_{n1}\cdots a_{nk_n})]= \\
(-1)^pt_n(x_{i_n},\ldots,x_{i_1})t_{k_1+\dots+k_n+n}(y^{i_1},a_{11},\ldots,a_{1k_1},\ldots,y^{i_n},a_{n1},\ldots,a_{nk_n})
\end{multline}
where the repeated indices again indicate a summation. Here, the tensor $t_n:A^{\otimes n}\to\gf$ is given by
\begin{equation} \label{eqn_product}
t_n(a_1,\ldots,a_n):=\langle a_1\cdots a_{n-1},a_n \rangle
\end{equation}
and the sign is given by the formula
\begin{equation} \label{eqn_tensorsign}
p:=\sum_{r=1}^{n-1} |y^{i_{r+1}}|(|a_{11}|+\cdots+|a_{1k_1}|+\cdots+|a_{r1}|+\cdots+|a_{rk_r}|).
\end{equation}
To describe the tensors $\alpha_n^{g,b}$, we define maps $\beta,\gamma:A\to A$ by
\begin{equation} \label{eqn_beta}
\beta(a):= x_iy^ia
\end{equation}
and
\begin{equation} \label{eqn_gamma}
\gamma(a):=(-1)^{x_jy^i}x_ix_jy^iy^ja.
\end{equation}
This allows us to give a concrete formula for $\alpha_n^{g,b}$:
\begin{multline} \label{eqn_tensorformula}
\alpha_n^{g,b}[(a_{11}\cdots a_{1k_1})\otimes\cdots\otimes(a_{n1}\cdots a_{nk_n})]=\\
\alpha_n^{0,0}[(\beta^b\gamma^g(a_{11})\cdots a_{1k_1})\otimes\cdots\otimes(a_{n1}\cdots a_{nk_n})]
\end{multline}
In fact, the formula remains true if we apply the map $\beta^b\gamma^g$ to any one of the arguments of $\alpha_n^{0,0}$.

These tensors satisfy certain identities which we will use in our construction of a cocycle on the moduli space.

\begin{lemma} \label{lem_otftid}
The tensors $\alpha_n^{g,b} \in T(A^*)^{\otimes n}$ satisfy the following identities:
\begin{enumerate}
\item \label{item_tensorsymmetric}
The tensors \eqref{eqn_tensor} are $\mathbb{S}_n$-invariant,
\begin{multline*}
\alpha_n^{g,b}[(a_{11}\cdots a_{1k_1})\otimes\cdots\otimes(a_{n1}\cdots a_{nk_n})] =\\
\pm \alpha_n^{g,b}[(a_{\sigma(1)1}\cdots a_{\sigma(1)k_{\sigma(1)}})\otimes\cdots\otimes(a_{\sigma(n)1}\cdots a_{\sigma(n)k_{\sigma(n)}})]
\end{multline*}
for any $\sigma\in\mathbb{S}_n$, where the sign $\pm$ is given by the usual Koszul sign rule.
\item \label{item_tensorcyclic}
The tensors \eqref{eqn_tensor} are cyclically invariant in each one of the $n$ arguments,
\begin{multline*}
\alpha_n^{g,b}[(a_{11}\cdots a_{1k_1})\otimes\cdots\otimes(a_{n1}\cdots a_{nk_n})] =\\
\pm\alpha_n^{g,b}[(a_{1\sigma_1(1)}\cdots a_{1\sigma_1(k_1)})\otimes\cdots\otimes(a_{n\sigma_n(1)}\cdots a_{n\sigma_n(k_n)})]
\end{multline*}
where $\sigma_i\in\mathbb{Z}/k_i\mathbb{Z}$ is any collection of cyclic permutations; again, the sign $\pm$ is given by the Koszul sign rule.
\item \label{item_tensorboundarystrip}
\begin{multline*}
\alpha_n^{g,b}[(x_ia_{11}\cdots a_{1k_1}y^ia_{21}\cdots a_{2k_2})\otimes\cdots\otimes(a_{(n+1)1}\cdots a_{(n+1)k_{n+1}})] =\\
(-1)^p\alpha_{n+1}^{g,b}[(a_{11}\cdots a_{1k_1})\otimes(a_{21}\cdots a_{2k_2})\otimes\cdots\otimes(a_{(n+1)1}\cdots a_{(n+1)k_{n+1}})]
\end{multline*}
where $p:=|y^i|(|a_{11}|+\cdots+|a_{1k_1}|)$.
\item \label{item_tensorboundarygenus}
\begin{multline*}
\alpha_n^{g,b}[(a_{11}\cdots a_{1k_1}x_i)\otimes(y^ia_{21}\cdots a_{2k_2})\otimes\cdots\otimes(a_{n1}\cdots a_{nk_n})] =\\
\alpha_{n-1}^{g+1,b}[(a_{11}\cdots a_{1k_1}a_{21}\cdots a_{2k_2})\otimes(a_{31}\cdots a_{3k_3})\otimes\cdots\otimes(a_{n1}\cdots a_{nk_n})]
\end{multline*}
\item \label{item_tensorglue}
\begin{multline*}
\alpha_{n_1}^{g_1,b_1}[(a_{11}\cdots a_{1k_1})\otimes\cdots\otimes(a_{n_11}\cdots a_{n_1k_{n_1}}x_i)]\alpha_{n_2}^{g_2,b_2}[(y^ib_{11}\cdots b_{1l_1})\otimes\cdots\otimes(b_{n_21}\cdots b_{n_2l_{n_2}})] =\\
\alpha_{n_1+n_2-1}^{g_1+g_2,b_1+b_2}[(a_{11}\cdots a_{1k_1})\otimes\cdots\otimes(a_{n_11}\cdots a_{n_1k_{n_1}}b_{11}\cdots b_{1l_1})\otimes\cdots\otimes(b_{n_21}\cdots b_{n_2l_{n_2}})]
\end{multline*}
\item \label{item_tensorclosed}
The tensors \eqref{eqn_tensor} are $d$-closed.
\end{enumerate}
\end{lemma}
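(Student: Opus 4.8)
My plan is to read each of the six identities as the image, under the structure morphism $\OTFT\to\mathcal{E}[A]$ determined by the Frobenius algebra $A$, of an elementary operation in the modular operad $\OTFT$. The decisive point is that operadic composition in the endomorphism operad $\mathcal{E}[A]$ is \emph{by definition} the contraction of tensors along edges using $\innprod^{-1}=x_i\otimes y^i$; since a morphism of modular operads intertwines the structure maps, every contracted pair $x_i\otimes y^i$ appearing on the left-hand side of an identity may be replaced by the surface obtained from gluing the corresponding boundary intervals in $\OTFT$. Thus identities \eqref{item_tensorboundarystrip}, \eqref{item_tensorboundarygenus} and \eqref{item_tensorglue} become purely topological statements about gluing surfaces, which I then verify against the stated indices and signs.

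Identities \eqref{item_tensorsymmetric} and \eqref{item_tensorcyclic} require no new argument: they are exactly the independence of $\alpha_n^{g,b}$ from the labelling of the marked boundary components and from the chosen cyclic representatives of the intervals, which was established in the paragraphs preceding the lemma by means of orientation-preserving diffeomorphisms, and which at the operadic level is the $\mathbb{S}_n$-equivariance and cyclic symmetry already built into $\OTFT$. Identity \eqref{item_tensorclosed} is equally immediate: the morphism $\OTFT\to\mathcal{E}[A]$ commutes with differentials by Definition \ref{def_modop}, while $\OTFT$, being the free vector space on isomorphism classes of topological surfaces, carries the zero differential; hence every $\alpha_n^{g,b}$ is a cocycle in $\mathcal{E}[A]$. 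Concretely this reflects the fact that $t_n$ is $d$-invariant, a consequence of the compatibility $\langle d(x),y\rangle+(-1)^x\langle x,d(y)\rangle=0$ together with $d$ being a derivation of the product, and that $\beta$, $\gamma$ and the copairing $x_i\otimes y^i$ are $d$-closed.

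For the three gluing identities I would carry out the bookkeeping with the relation $\chi=2-2\lambda-\nu$, using that gluing along a single interval lowers $\chi$ by one. In identity \eqref{item_tensorboundarygenus} the elements $x_i$ and $y^i$ sit on two \emph{distinct} marked boundary components of one connected surface; gluing these merges the two components and raises the genus by one, producing $\alpha_{n-1}^{g+1,b}$. In identity \eqref{item_tensorboundarystrip} the pair $x_i,y^i$ lies on the \emph{same} marked boundary component, so the contraction is a self-identification that cuts that boundary circle into two while preserving the genus, which is why the right-hand side is $\alpha_{n+1}^{g,b}$ at the same genus. In identity \eqref{item_tensorglue} one glues a marked boundary interval of one surface to a marked boundary interval of a second, disjoint surface; the genera and the numbers of marked and free boundary components then simply add, after the two glued components merge into one. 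In each case I would reduce to the genus-zero, no-extra-boundary tensor $\alpha_n^{0,0}$ via the explicit formula \eqref{eqn_tensorformula}, using that the handle- and boundary-creating maps $\gamma$ and $\beta$ of \eqref{eqn_gamma} and \eqref{eqn_beta} may be transported to whichever argument is convenient (as asserted after \eqref{eqn_tensorformula}) and commute with the gluings --- itself an instance of the associativity of the modular operad structure map.

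The step I expect to be the genuine obstacle is reconciling the explicit Koszul signs of identities \eqref{item_tensorboundarystrip}--\eqref{item_tensorglue}. The topology is transparent, but matching the precise signs --- for example the sign $p=|y^i|(|a_{11}|+\cdots+|a_{1k_1}|)$ of identity \eqref{item_tensorboundarystrip} --- forces one to track how the dummy summation indices of $\innprod^{-1}$ are permuted past the algebra elements as the tensor factors are rearranged into the canonical cyclic order used to define $\alpha_n^{0,0}$ in \eqref{eqn_tensorzero}. Organising this sign computation cleanly, rather than the underlying surface combinatorics, is where the real care is needed.
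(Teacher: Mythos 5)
Your proposal follows essentially the same route as the paper: items \eqref{item_tensorsymmetric} and \eqref{item_tensorcyclic} from the diffeomorphism argument that established well-definedness, items \eqref{item_tensorboundarystrip}--\eqref{item_tensorglue} from the open topological field theory gluing axiom identifying contraction against $\innprod^{-1}$ with gluing of boundary intervals (the paper conveys the Euler-characteristic bookkeeping you spell out via figures), and item \eqref{item_tensorclosed} from the fact that morphisms of modular operads commute with differentials. Your additional remarks on the genus/boundary counts and on the Koszul signs are consistent with the paper's (terser) treatment.
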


\begin{proof}
Items \eqref{item_tensorsymmetric} and \eqref{item_tensorcyclic} concerning the invariance properties of the tensor $\alpha_n^{g,b}$ follow from exactly the same arguments that established that the tensor $\alpha_n^{g,b}$ was well-defined.

Items \eqref{item_tensorboundarystrip}, \eqref{item_tensorboundarygenus} and \eqref{item_tensorglue} follow from the axioms of an open topological field theory which state that the map yielded by the open topological field theory that is obtained from a surface after gluing that surface along two of its boundary intervals is the same as the map that is yielded by the original surface via the open topological field theory, after contracting those tensors corresponding to those intervals using the inverse inner product $\innprod=x_i\otimes y^i$. These identities are illustrated graphically by the following figures.

\begin{figure}[htp]
\centering
\includegraphics{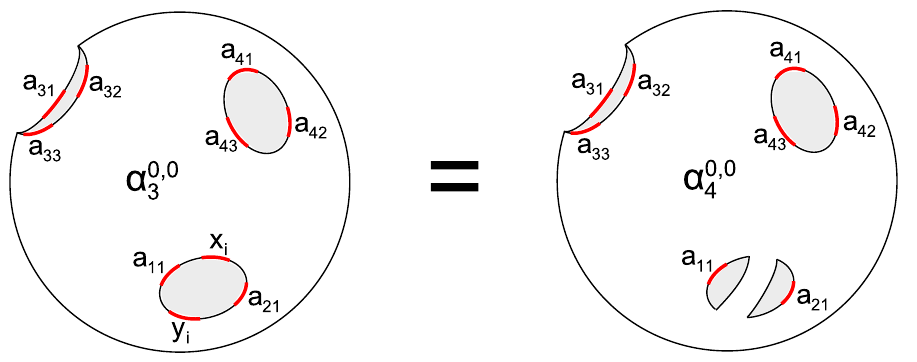}
\caption{The topological field theory axiom for gluing intervals located in the same boundary component described by Item \eqref{item_tensorboundarystrip}.}
\end{figure}

\clearpage

\begin{figure}[htp]
\centering
\includegraphics{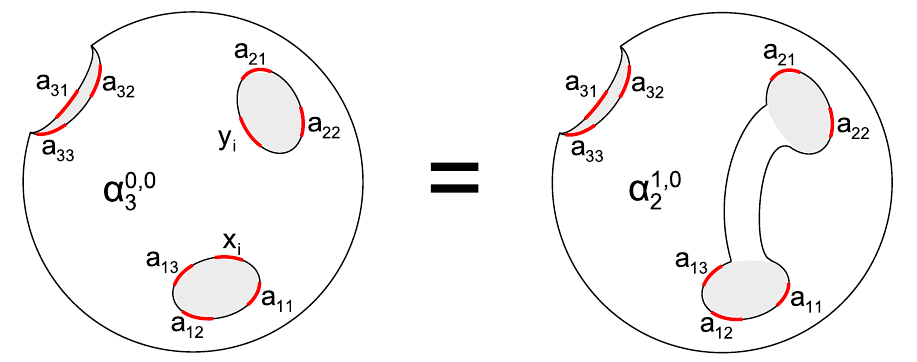}
\caption{The topological field theory axiom for gluing intervals located in different boundary components described by Item \eqref{item_tensorboundarygenus}.}
\end{figure}

\begin{figure}[htp]
\centering
\includegraphics{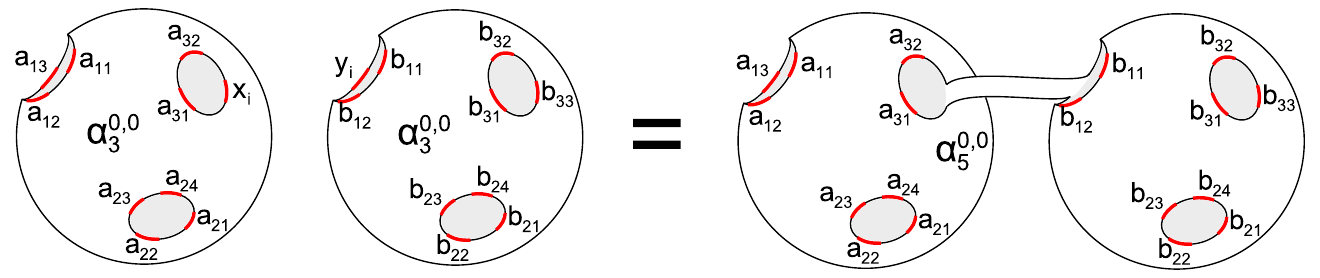}
\caption{The topological field theory axiom for gluing intervals located in different surfaces described by Item \eqref{item_tensorglue}.}
\end{figure}

Item \ref{item_tensorclosed} is a consequence of Definition \ref{def_modop} which states that morphisms of modular operads are supposed to commute with the differentials.
\end{proof}

In the preceding discussion, it was necessary to assume that the inner product on our Frobenius algebra $A$ was \emph{even}. A slightly different approach is required if this inner product is \emph{odd}. In this case, it is possible to formulate the notion of a \emph{twisted} open topological field theory, again using the language of modular operads; however, in this case this language can become quite cumbersome, so we will provide a direct description instead.

Given a differential graded Frobenius algebra $A$ with an \emph{odd} inner product, we define the tensors $\alpha_n^{g,b}\in T(A^*)^{\otimes n}$ by the same formulae \eqref{eqn_tensorzero}--\eqref{eqn_tensorformula} that described this tensor in the case of an \emph{even} inner product, the only difference being that now the sign \eqref{eqn_tensorsign} is given by the formula
\[ p:=\sum_{r=1}^{n-1} |y^{i_{r+1}}|(|a_{11}|+\cdots+|a_{1k_1}|+\cdots+|a_{r1}|+\cdots+|a_{rk_r}|+r). \]
These tensors satisfy identities completely analogous to those of Lemma \ref{lem_otftid}, the only slight difference being in Item \ref{item_tensorsymmetric} in that the tensor $\alpha_n^{g,b}$ is \emph{skew}-symmetric; that is to say that it is invariant under the \emph{signed} action of $\mathbb{S}_n$. Furthermore, one may note that in the case of an \emph{odd} inner product we have $\beta^2=0$, so that in this case the tensors $\alpha_n^{g,b}$ are zero for $b\geq 2$.

\section{Classical Batalin-Vilkovisky formalism} \label{sec_cbvform}

In this section we briefly recall classical theorems of the Batalin-Vilkovisky formalism and superintegral calculus, as a precursor to the formulation of a noncommutative analogue described in the next section. This material is completely standard, cf. for example \cite{bvgeom}; however, we shall refer to \cite{bvform} for the formulation of these standard results which are most convenient for our purposes.

\subsection{Superintegrals} \label{sec_superintegrals}

Suppose that we fix a nondegenerate quadratic superfunction $\sigma$ on $\mathbb{R}^{n|2m}$ of the form
\begin{equation} \label{eqn_quadratic}
\sigma=\frac{1}{2}\sum_{i=1}^k x_i^2 - \frac{1}{2}\sum_{i=k+1}^n x_i^2 + \sum_{i=1}^m \xi_{2i-1}\xi_{2i}.
\end{equation}
Given any polynomial superfunction $f(x,\xi)$ on $\mathbb{R}^{n|2m}$, there is a way to make sense of integrals of the form
\[ \int_{\mathbb{R}^{n|m}} f(x,\xi)e^{-\sigma(x,\xi)} d\mathbf{x}d\mathbf{\xi}. \]
Recall that the integration of such a superfunction with respect to the \emph{odd} variables $\xi_i$ is defined by stipulating that the integral operators $\int d\xi_i$ coincide with the differential operators $\frac{\partial}{\partial\xi_i}$. In this way, integrating out the odd variables amounts to picking out the coefficient of the $\xi_1\cdots\xi_{2m}$ term of the polynomial. We are then left with the problem of making sense of integrals of the form
\[ \int_{\mathbb{R}^n}f(x)e^{-\frac{1}{2}(\sum_{i=1}^k x_i^2 - \sum_{i=k+1}^n x_i^2)}, \]
where $f(x)$ is a polynomial in $x$. This is done in the standard way, by means of the Wick rotation; for instance $\int_{-\infty}^{\infty}e^{\frac{1}{2}x^2}dx=-i\sqrt{2\pi}$.

\subsection{Classical Batalin-Vilkovisky formalism}

Here we will recall the basic workings of the Batalin-Vilkovisky formalism. Let $(V,\innprod)$ be a symplectic vector space with an \emph{odd} symplectic form. From such a space there arises a canonical differential graded Lie algebra
\[ \mathfrak{p}[V]:=\gf[h^{-1},h]\otimes S(V^*) \]
which comes equipped with the canonical Poisson anti-bracket $\{-,-\}$ and super-Laplacian $h\Delta$ induced by the symplectic form $\innprod$.

There is a map of complexes
\begin{equation} \label{eqn_BVmult}
\mathrm{M}:C_\bullet(\mathfrak{p}[V]) \to \mathfrak{p}[V]
\end{equation}
defined by the equation $\mathrm{M}(p_1\otimes\cdots\otimes p_n):= h^{-n}p_1\cdots p_n$. This fact follows from the standard defining identity for the bracket $\{-,-\}$ and super-Laplacian $\Delta$,
\begin{equation} \label{eqn_BVidentity}
\Delta(xy)=\Delta(x)y + (-1)^x x\Delta(y)+\{x,y\}.
\end{equation}

Now let us fix a quadratic superfunction $\sigma$ on $V$ and consider the differential graded Lie algebra
\[ \mathfrak{p}_\sigma[V]\subset\gf[h^{-1},h]\otimes\widehat{S}(V^*) \]
consisting of functions of the form
\[ f(x,\xi)e^{-\sigma(x,\xi)}, \]
where $f(x,\xi)$ is a polynomial superfunction; the differential $h\Delta$ and bracket $\{-,-\}$ are defined as before. Note that in this notation we have $\mathfrak{p}[V]=\mathfrak{p}_0[V]$.

We say that a subspace $L\subset V$ is a \emph{Lagrangian subspace} if it is an isotropic subspace of maximal dimension. Two Lagrangian subspaces have the same \emph{type} if they have the same superdimension, that is to say that their odd and even parts both have the same dimension. Given any choice of a Lagrangian subspace $L\subset V$ on which the quadratic superfunction $\sigma$ is \emph{nondegenerate}, we can define an $\gf[h,h^{-1}]$-linear map
\begin{equation} \label{eqn_bvcocycle}
\begin{array}{ccc}
\mathfrak{p}_\sigma[V] & \to & \mathbb{R}[h^{-1},h] \\
f(x,\xi)e^{-\sigma(x,\xi)} & \mapsto & \frac{\int_L f(x,\xi)e^{-\sigma(x,\xi)} d\mathbf{x}d\mathbf{\xi}}{\int_L e^{-\sigma(x,\xi)}d\mathbf{x}d\mathbf{\xi}}
\end{array}
\end{equation}
In order to make sense of the integrals appearing on the right, we identify our Lagrangian subspace $L$ with a space $\mathbb{R}^{n|2m}$ on which the quadratic superfunction $\sigma$ takes the canonical form \eqref{eqn_quadratic}. These integrals may then be evaluated using the method described in Section \ref{sec_superintegrals}. Under this definition it may seem that the quantity on the right depends on how we identify $L$ with $\mathbb{R}^{n|2m}$; however, there is a combinatorial formula, known as Wick's formula, which states that the quantity on the right may be evaluated by a sum whose terms are defined by contracting the tensors of the polynomial $f(x,\xi)$ with the inverse inner product corresponding to $\sigma$. It is this formula that implies that the quantity on the right is well-defined.

\begin{theorem} \label{thm_BVcocycle}
The above map \eqref{eqn_bvcocycle} is a $h\Delta$-cocycle. Furthermore, suppose that $\sigma$ satisfies the quantum master equation $\Delta(\sigma)=\frac{1}{2}\{\sigma,\sigma\}$; then if $L$ and $L'$ are two Lagrangian subspaces of the same type on which $\sigma$ is nondegenerate, the corresponding cocycles are cohomologous.
\end{theorem}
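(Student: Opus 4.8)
The plan is to treat the two assertions in turn, both resting on the single analytic fact that the super-Laplacian integrates to zero over a Lagrangian, namely $\int_L\Delta(G)\,d\mathbf{x}\,d\mathbf{\xi}=0$ for every $G\in\mathfrak{p}_\sigma[V]$. For the cocycle property, note that since the map \eqref{eqn_bvcocycle} is $\gf[h,h^{-1}]$-linear and its denominator $\int_L e^{-\sigma}$ is a fixed scalar, being a $h\Delta$-cocycle amounts exactly to the vanishing $\int_L\Delta(F)\,d\mathbf{x}\,d\mathbf{\xi}=0$ for all $F\in\mathfrak{p}_\sigma[V]$. Working in Darboux coordinates adapted to $L$ one writes $\Delta=\sum_i\frac{\partial^2}{\partial x_i\partial\xi_i}$; integrating out the odd variables turns each summand into a total $x$-derivative of a polynomial multiple of the Gaussian $e^{-\sigma}$, and the integral of such a total derivative over $\mathbb{R}^n$ vanishes because the Gaussian suppresses the boundary contributions at infinity (via the Wick rotation of Section \ref{sec_superintegrals}). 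This is just integration by parts, and it establishes the cocycle property unconditionally.

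For the second statement I would first record that the quantum master equation is equivalent to $e^{-\sigma}$ being $h\Delta$-closed: a direct application of the defining identity \eqref{eqn_BVidentity} gives $\Delta(e^{-\sigma})=\bigl(-\Delta(\sigma)+\tfrac12\{\sigma,\sigma\}\bigr)e^{-\sigma}$, which vanishes precisely when $\Delta(\sigma)=\tfrac12\{\sigma,\sigma\}$. The whole of the second claim then follows from one variational computation. Since $L$ and $L'$ have the same type they are carried into one another by a linear symplectomorphism, so I would join them by a smooth path $L_t$, with $L_0=L$ and $L_1=L'$, generated by the (time-dependent) Hamiltonian vector field $X_{H_t}$ of a quadratic function $H_t$. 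Differentiating and applying the change-of-variables formula yields $\frac{d}{dt}\int_{L_t}F=\int_{L_t}\bigl(\{H_t,F\}+F\cdot\mathrm{div}(X_{H_t})\bigr)$. The crux is the supergeometric computation that for an odd symplectic form the divergence of $X_H$ with respect to the coordinate measure is $\mathrm{div}(X_H)=\Delta(H)$; feeding this into \eqref{eqn_BVidentity}, which rearranges to $\{H_t,F\}=\Delta(H_tF)-\Delta(H_t)F-(-1)^{|H_t|}H_t\Delta(F)$, and discarding $\int_{L_t}\Delta(H_tF)=0$ by the first paragraph, the two terms involving $\Delta(H_t)\cdot F$ cancel and leave $\frac{d}{dt}\int_{L_t}F=\pm\int_{L_t}H_t\,\Delta(F)$.

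This formula does all the work. Applying it to the $h\Delta$-closed element $e^{-\sigma}$ shows the two normalizations $\int_L e^{-\sigma}=\int_{L'}e^{-\sigma}=:Z$ coincide, which is the only place the quantum master equation is used. Integrating the formula in $t$ expresses the difference of numerators as $\int_L F-\int_{L'}F=\Psi_0(\Delta F)$ for the explicit $\gf[h,h^{-1}]$-linear functional $\Psi_0(G):=\pm\int_0^1\int_{L_t}H_t\,G\,dt$, which is defined on all of $\mathfrak{p}_\sigma[V]$ because $H_tG$ is again a polynomial multiple of $e^{-\sigma}$. Because the normalizations agree, dividing by $Z$ gives $\Phi_L-\Phi_{L'}=\Psi\circ(h\Delta)$ with $\Psi:=Z^{-1}h^{-1}\Psi_0$, exhibiting the two cocycles as cohomologous.

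The step I expect to be the genuine obstacle is the exact bookkeeping in the variational computation: pinning down the divergence formula $\mathrm{div}(X_H)=\Delta(H)$ together with all Koszul signs so that the two $\Delta(H_t)\cdot F$ contributions cancel identically rather than leaving a parity-dependent residue. Naive sign tracking already shows that $-\Delta(H_t)F$ and $+F\Delta(H_t)$ differ by a factor depending on $|F|$, so the clean cancellation hinges on fixing the conventions for the Hamiltonian vector field, the bracket, and the divergence consistently; this is exactly what distinguishes the odd symplectic (Batalin-Vilkovisky) setting from the classical even case and is the reason the super-Laplacian appears at all. A secondary, more topological, difficulty is ensuring the path $L_t$ can be kept inside the locus on which $\sigma$ stays nondegenerate, since a priori it may cross caustics where $\int_{L_t}$ is undefined; this is handled either by arguing that the nondegeneracy locus for a fixed type is connected or, more robustly, by working throughout with the analytically continued Wick-rotated integral of Section \ref{sec_superintegrals}, for which the variational identity persists across such loci. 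Modulo these points the argument is the standard transfer of the classical Batalin-Vilkovisky theorem recalled in \cite{bvform}.
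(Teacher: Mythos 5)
Your argument is correct in substance, but it is worth saying up front that the paper does not actually prove this theorem: its ``proof'' consists of citing Corollary 5.17 and part (2) of Theorem 6.17 of \cite{bvform}, and what you have written is essentially the standard argument underlying those citations --- integration by parts over the Lagrangian (the vanishing of $\int_L\Delta(G)$, seen coordinate-by-coordinate as a total even derivative killed by Gaussian decay or a repeated odd derivative killed by $\partial_{\xi}^2=0$) for the cocycle property, and a deformation of Lagrangian subspaces producing an explicit chain homotopy for the independence statement. Your localization of where the quantum master equation enters is also the right one: it is equivalent, via \eqref{eqn_BVidentity}, to $\Delta(e^{-\sigma})=0$, and is used only to force the two normalizing constants to agree, without which the difference of the normalized functionals would not be a coboundary. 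The two difficulties you flag are the genuine ones, so let me comment on them rather than on any gap. In the variational step, with the usual normalization $\Delta(H)=\tfrac12\,\mathrm{div}(X_H)$ the two $\Delta(H_t)\cdot F$ contributions need not cancel on the nose and can leave a residue proportional to $\Delta(H_t)\int_{L_t}F$; since $H_t$ is quadratic, $\Delta(H_t)$ is a scalar, so this only converts your identity into a linear ODE solved with an integrating factor --- the conclusion that the cocycles are cohomologous survives, with $\Psi$ rescaled. The connectivity of the locus of Lagrangians of a fixed type on which $\sigma$ is nondegenerate is likewise a real issue over $\gf$ and must be handled by the analytic-continuation (Wick rotation) device of Section \ref{sec_superintegrals}, as you suggest; this is how the cited reference proceeds. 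In short: the proposal supplies a correct self-contained proof of a statement the paper only outsources, and what it buys is precisely the visibility of where the quantum master equation and the ``same type'' hypothesis are used.
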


\begin{proof}
This theorem is a standard result of the Batalin-Vilkovisky formalism. For convenience we shall refer to the previous paper \cite{bvform}. That this map is a $h\Delta$-cocycle is Corollary 5.17 of \cite{bvform}. The fact that two Lagrangian subspaces of the same type give rise to cohomologous cocycles can be shown in precisely the same way that part (2) of Theorem 6.17 of \cite{bvform} is proven.
\end{proof}

\section{Noncommutative Batalin-Vilkovisky formalism and the main construction} \label{sec_ncbv}

The central object of the classical Batalin-Vilkovisky formalism is the differential graded Lie algebra $\mathfrak{p}[V]$ which encodes its underlying commutative geometry. In what follows we replace this object by the noncommutative object $\Lambda_{\gamma,\nu}[V]$ which plays the same role. The relationship between the two is expressed by a simple map \eqref{eqn_nctocomm} from $\Lambda_{\gamma,\nu}[V]$ to $\mathfrak{p}[V]$ which allows us to transfer all the classical theorems from the Batalin-Vilkovisky formalism to our new setting and consider the space $\Lambda_{\gamma,\nu}[V]$ as a space of noncommutative superfunctions that may be integrated in exactly the same way as described previously. This straightforward result is established later in the section in Proposition \ref{prop_nctocomm}.

Working in this noncommutative context we develop a construction which produces from a contractible differential graded Frobenius algebra, a Chevalley-Eilenberg cohomology class of the differential graded Lie algebra $\Lambda_{\gamma,\nu}[V]$ and hence, by Theorem \ref{thm_modspc}, a cohomology class on the corresponding compactification of the moduli space. This construction may be regarded as a formulation of the construction described by Chuang-Lazarev in \cite{dualfeyn} within the framework of the Batalin-Vilkovisky formalism, the latter being essentially a development of a construction described by Kontsevich in \cite{kontfeyn}. The main advantage of our approach is that it will enable us to perform concrete calculations, which we describe more fully later in the paper. The key ingredient of our approach which ensures that the \emph{cochain} that we construct from our Frobenius algebra is actually a \emph{cocycle} is the list of identities enumerated in Lemma \ref{lem_otftid} which correspond to the axioms of an open topological field theory; in a sense, the axioms of an open topological field theory are equivalent to the statement that the expressions that we write down give rise to a cocycle and not just a cochain.

\subsection{Map of differential graded Lie algebras}

In this section we describe how any differential graded Frobenius algebra gives rise to a map between two differential graded Lie algebras playing the role of noncommutative superfunctions in the Batalin-Vilkovisky formalism. The existence and properties of this map depend upon the axioms of an open topological field theory.

Let $(V,\innprod_V)$ be a symplectic vector space with an \emph{odd} symplectic form, and let $A$ be a differential graded Frobenius algebra whose nondegenerate symmetric bilinear form $\innprod_A$ is \emph{even}. On the tensor product $V\otimes A$ we can define an odd symplectic form by the formula
\[ \langle v_1\otimes a_1,v_2\otimes a_2 \rangle_{V\otimes A} = (-1)^{a_1v_2}\langle v_1,v_2\rangle_V \langle a_1,a_2 \rangle_A. \]

To any such pair of symplectic vector space $V$ and differential graded Frobenius algebra $A$, we can associate a map of differential graded Lie algebras
\begin{equation} \label{eqn_dglamap}
\Phi_A:\Lambda_{\gamma,\nu}[V]\to\Lambda_{\gamma,\nu}[V\otimes A]
\end{equation}
using the tensors $\alpha_n^{g,b} \in T(A^*)^{\otimes n}$ of \eqref{eqn_tensor}. This map $\Phi_A$ is defined uniquely by the commutative diagram
\begin{equation} \label{fig_dglamap}
\xymatrix{V^{*\otimes k_1}\otimes\cdots\otimes V^{*\otimes k_n} \ar[rrr]^-{-\otimes\alpha_n^{g,b}|_{[k_1,\ldots,k_n]}} \ar[dd]_{\pi^{g,b}} &&& (V^{*\otimes k_1}\otimes\cdots\otimes V^{*\otimes k_n})\otimes(A^{*\otimes k_1}\otimes\cdots\otimes A^{*\otimes k_n}) \ar@{=}[d] \\ &&& (V\otimes A)^{*\otimes k_1}\otimes\cdots\otimes (V\otimes A)^{*\otimes k_n} \ar[d]^{\pi^{g,b}} \\ \Lambda_{\gamma,\nu}[V] \ar[rrr]^{\Phi_A} &&& \Lambda_{\gamma,\nu}[V\otimes A] }
\end{equation}
where the map
\[\pi^{g,b}:V^{*\otimes k_1}\otimes\cdots\otimes V^{*\otimes k_n}\to\Lambda_{\gamma,\nu}[V]\subset\gf[\gamma,\nu]\otimes S(\mathfrak{h}_{\geq 1}[V])\]
is defined by the formula
\[ \pi^{g,b}[(x_{11}\otimes\cdots\otimes x_{1k_1})\otimes\cdots\otimes (x_{n1}\otimes\cdots\otimes x_{nk_n})]:= \gamma^g\nu^b[(x_{11}\cdots x_{1k_1})\cdots (x_{n1}\cdots x_{nk_n})],\]
for any $g,b\geq 0$. The fact that such a map $\Phi_A$ \emph{exists} which makes the above diagram commutative for all $g,b\geq 0$ is a consequence of items \eqref{item_tensorsymmetric} and \eqref{item_tensorcyclic} of Lemma \ref{lem_otftid} describing the symmetries of the tensor $\alpha_n^{g,b}$.

\begin{theorem} \label{thm_dglamap}
The map $\Phi_A$ is a map of differential graded Lie algebras.
\end{theorem}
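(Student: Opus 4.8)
The plan is to verify separately the two conditions defining a morphism of differential graded Lie algebras: that $\Phi_A$ intertwines the Lie brackets, and that it intertwines the differentials $d=\gamma\delta+\Delta$. Since $\Phi_A$ is pinned down by the commutative square \eqref{fig_dglamap}, it suffices to check both conditions on elements of the form $\gamma^g\nu^b\cdot[(x_{11}\cdots x_{1k_1})\cdots(x_{n1}\cdots x_{nk_n})]$, i.e. on the image of $\pi^{g,b}$, and the entire argument then reduces to matching each algebraic operation on $\Lambda_{\gamma,\nu}[V]$ with the corresponding gluing operation on the tensors $\alpha_n^{g,b}$ recorded in Lemma \ref{lem_otftid}. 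The organising dictionary is that a cyclic word corresponds to a labelled boundary component, the exponent $g$ of $\gamma$ records the genus of the underlying surface, and the exponent $b$ of $\nu$ records the number of unlabelled boundary components. Each contraction occurring in $\{-,-\}$, $\delta$ and $\Delta$ uses the inverse form on $V\otimes A$, whose $A$-factor is then interpreted by one of three identities according to whether it joins two intervals on the same boundary, on two boundaries of one connected surface, or on two different surfaces.

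For the bracket I would compute $\Phi_A\{X,Y\}$ and $\{\Phi_A X,\Phi_A Y\}$ directly on two monomials $X,Y$. The bracket \eqref{eqn_CEbracket} contracts a single cyclic word of $X$ against a single cyclic word of $Y$ using the inverse form, and on $V\otimes A$ this inverse form factors as $\innprod_V^{-1}\otimes\innprod_A^{-1}$ up to a Koszul sign coming from the definition of $\innprod_{V\otimes A}$. The $V$-factor reproduces verbatim the bracket computed in $\Lambda_{\gamma,\nu}[V]$ via \eqref{eqn_bracket}, while the $A$-factor glues the surfaces underlying $\alpha_{n_1}^{g_1,b_1}$ and $\alpha_{n_2}^{g_2,b_2}$ along one boundary interval each; this is precisely identity \eqref{item_tensorglue}, producing $\alpha_{n_1+n_2-1}^{g_1+g_2,b_1+b_2}$. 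Because genus and unlabelled components add under gluing, the output carries the coefficient $\gamma^{g_1+g_2}\nu^{b_1+b_2}$, which is exactly $\Phi_A$ applied to the $V$-bracket of $X$ and $Y$.

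For the differential I would split $\delta$ on the target into its bracket part and the internal part induced by $d_A$; on the source $V$ carries the zero differential, so $\delta$ there consists of the bracket part only. The internal part, applied after $\Phi_A$, differentiates $\alpha_n^{g,b}$ letter by letter and therefore vanishes by the $d$-closedness of item \eqref{item_tensorclosed}. The bracket part of $\gamma\delta$ contracts two distinct cyclic words of one monomial; its $A$-contraction joins two distinct boundary components of a single connected surface, which by identity \eqref{item_tensorboundarygenus} raises the genus by one, matched exactly by the extra factor of $\gamma$. Dually, the cobracket $\Delta$ of \eqref{eqn_cobracket} splits one cyclic word into two, and its $A$-contraction cuts one boundary component into two without changing the genus, which is identity \eqref{item_tensorboundarystrip}; one should record here that when the cut produces an empty arc the source gains a factor of $\nu$, and the relation $\alpha_n^{g,b}[(\beta(a)\cdots)\otimes\cdots]=\alpha_n^{g,b+1}[(a\cdots)\otimes\cdots]$, which follows from \eqref{eqn_beta} and \eqref{eqn_tensorformula}, shows that this is consistent with an increment of $b$ on the target.

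I expect the genuine difficulty to lie entirely in the sign bookkeeping, the combinatorial matching being dictated by the topology. The signs appearing in the bracket \eqref{eqn_bracket}, the cobracket \eqref{eqn_cobracket} and the Chevalley--Eilenberg differential \eqref{eqn_cediff} must be reconciled with the sign \eqref{eqn_tensorsign} built into $\alpha_n^{g,b}$, and, most delicately, with the Koszul signs generated when $A$-factors are commuted past $V$-factors in order to factor $\innprod_{V\otimes A}^{-1}$ and to identify tensors over $V\otimes A$ with tensors over $V$ tensored against tensors over $A$. Establishing that these signs conspire to yield equality, uniformly in $g$, $b$ and the word lengths $k_i$, is the main labour; the symmetry properties \eqref{item_tensorsymmetric} and \eqref{item_tensorcyclic} (already used to see that $\Phi_A$ is well defined) guarantee that the answer is independent of the representatives chosen in each cyclic word, so that the sign computation need only be carried out once per case.
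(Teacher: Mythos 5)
Your proposal is correct and follows essentially the same route as the paper: the compatibility with the bracket is reduced to identity \eqref{item_tensorglue}, the $\gamma\cdot\delta$ part of the differential to identity \eqref{item_tensorboundarygenus}, and the cobracket $\Delta$ to identity \eqref{item_tensorboundarystrip} of Lemma \ref{lem_otftid}, which is exactly the paper's argument (your extra bookkeeping of the $\nu$-power via $\beta$ when the cobracket produces an empty cyclic word is a welcome detail the paper leaves implicit). One minor remark: the differential on the target $\Lambda_{\gamma,\nu}[V\otimes A]$ is built solely from the symplectic structure on $V\otimes A$ and carries no internal $d_A$-term, so the step where you invoke item \eqref{item_tensorclosed} is not actually needed here (it is used later, in Lemma \ref{lem_brackvanish}), though including it is harmless.
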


\begin{proof}
This is a consequence of the axioms of an open topological field theory. In fact the statement that $\Phi_A$ is a map of differential graded Lie algebras is equivalent to identities \eqref{item_tensorboundarystrip}--\eqref{item_tensorglue} of Lemma \ref{lem_otftid}, as we will now explain.

The differentials $d$ on $\Lambda_{\gamma,\nu}[V]$ and $\Lambda_{\gamma,\nu}[V\otimes A]$ may be separated into two parts
\[ d:=\gamma\cdot\delta + \Delta \]
where $\delta$ is induced by the bracket \eqref{eqn_bracket} on $\mathfrak{h}[V]$ and $\Delta$ is induced by the cobracket \eqref{eqn_cobracket} on $\mathfrak{h}[V]$. The fact that the map $\Phi_A$ commutes with the differential $\Delta$ is a consequence of identity \eqref{item_tensorboundarystrip} of Lemma \ref{lem_otftid}. The fact that the map $\Phi_A$ commutes with the differential $\gamma\cdot\delta$ is a consequence of identity \eqref{item_tensorboundarygenus} of the same lemma. Lastly, the fact that $\Phi_A$ respects the brackets \eqref{eqn_CEbracket} on $\Lambda_{\gamma,\nu}[V]$ and $\Lambda_{\gamma,\nu}[V\otimes A]$ induced by the brackets \eqref{eqn_bracket} on $\mathfrak{h}[V]$ and $\mathfrak{h}[V\otimes A]$ is a consequence of identity \eqref{item_tensorglue} of the same Lemma.

These facts may seem surprising initially, so let us demonstrate the first one in a simple example. We may write the tensor $\alpha_1^{0,0}|_{[k]}\in A^{*\otimes k}$ as
\[ \alpha_1^{0,0}|_{[k]} = \sum_r a^r_1\otimes\cdots\otimes a^r_k. \]
Now consider the tensor $\mathbf{x}:=x_1\cdots x_k\in\mathfrak{h}[V]\subset\Lambda_{\gamma,\nu}[V]$, then

\begin{displaymath}
\begin{split}
\Phi(\Delta(\mathbf{x})) &= \sum_{i<j}\pm\langle x_i,x_j \rangle^{-1}\pi^{0,0}\left[\begin{subarray}{c} (x_{i+1}\otimes\cdots\otimes x_{j-1}) \otimes (x_{j+1}\otimes\cdots\otimes x_k \otimes x_1\otimes\cdots\otimes x_{i-1}) \\ \otimes \\ \alpha_2^{0,0}|_{[j-i-1,k-j+i-1]} \end{subarray}\right] \\
\Delta(\Phi(\mathbf{x})) &= \Delta\pi^{0,0}\left[\begin{subarray}{c} (x_1\otimes\cdots\otimes x_k) \\ \otimes \\ \alpha_1^{0,0}|_{[k]} \end{subarray}\right] \\
&= \sum_r\sum_{i<j}\pm\langle x_i,x_j \rangle^{-1}\langle a^r_i,a^r_j \rangle^{-1}\pi^{0,0}\left[ \begin{subarray}{c}(x_{i+1}\otimes\cdots\otimes x_{j-1}) \otimes (x_{j+1}\otimes\cdots\otimes x_k \otimes x_1\otimes\cdots\otimes x_{i-1}) \\ \otimes \\ (a^r_{i+1}\otimes\cdots\otimes a^r_{j-1}) \otimes (a^r_{j+1}\otimes\cdots\otimes a^r_k\otimes a^r_1\otimes\cdots\otimes a^r_{i-1}) \end{subarray}\right]
\end{split}
\end{displaymath}
It follows from identity \eqref{item_tensorboundarystrip} of Lemma \ref{lem_otftid} and the fact that the tensor $\alpha_1^{0,0}|_{[k]}$ is cyclically symmetric that these two expressions coincide.
\end{proof}

Suppose instead that we start with a symplectic vector space $(V,\innprod_V)$ whose symplectic form is \emph{even} and a differential graded Frobenius algebra $A$ whose bilinear form $\innprod_A$ is \emph{odd}. In such a case we may also construct a map of differential graded Lie algebras
\[\Phi_A:\Lambda_{\gamma,\nu}[V]\to\Lambda_{\gamma,\nu}[V\otimes A]\]
in exactly the same way as we did above by instead using the tensors $\alpha_n^{g,b} \in T(A^*)^{\otimes n}$ defined in this situation at the end of Section \ref{sec_otft}. In either case, the map $\Phi_A$ induces a map
\[ \Phi_A:C_\bullet(\Lambda_{\gamma,\nu}[V]) \to C_\bullet(\Lambda_{\gamma,\nu}[V\otimes A]) \]
on the Chevalley-Eilenberg complexes.

\subsection{Hodge decomposition}
Here we recall the notion of an abstract Hodge decomposition from \cite{minmod}.

\begin{defi}
Suppose we have a differential graded Frobenius algebra $A$. An \emph{abstract Hodge decomposition} of $A$ consists of a pair of operators $s,\pi:A\to A$ such that
\begin{equation}
\begin{split}
ds+sd &= \id - \pi \\
s^2 &= 0 \\
\pi^2 &= \pi \\
d\pi &= \pi d =0 \\
\pi s &= s\pi=0 \\
\langle s(a),b \rangle &= (-1)^a \langle a, s(b) \rangle \\
\langle \pi(a),b \rangle &= \langle a,\pi(b) \rangle
\end{split}
\end{equation}
\end{defi}

Such an abstract Hodge decomposition is equivalent to a decomposition of $A$
\[ A=\im(d) \oplus \im(\pi) \oplus \im(s) \]
into an acyclic subspace $\im(d)\oplus \im(s)$ and a subspace $\im(\pi)\cong H(A)$. Any differential graded Frobenius algebra $A$ can be decomposed in this manner, in a noncanonical way \cite{feynmod}.

\subsection{Chevalley-Eilenberg cocycles} \label{sec_cocycles}

Let $(V,\innprod_V)$ be a symplectic vector space whose symplectic form is \emph{odd}. We are now in a position to construct a cocycle on the Chevalley-Eilenberg complex of $\Lambda_{\gamma,\nu}[V]$, and concomitantly on the Looijenja compactification of the moduli space. Furthermore, we will describe under what conditions our construction gives rise to a Chevalley-Eilenberg cocycle of the differential graded Lie algebra $\Lambda_{\gamma}[V]$ whose homology computes the homology of the Kontsevich compactification of the moduli space. Before we do this however, we formulate the following simple result describing the relationship between the noncommutative geometry used in this section and the commutative geometry used in the previous section on the classical Batalin-Vilkovisky formalism.

\begin{prop} \label{prop_nctocomm}
There is a map of differential graded Lie algebras
\begin{equation} \label{eqn_nctocomm}
\mathrm{N}:\Lambda_{\gamma,\nu}[V] \to \mathfrak{p}[V]
\end{equation}
defined by the equation
\[ \mathrm{N}(\gamma^i\nu^j [(x_{11}\cdots x_{1k_1})\cdots(x_{n1}\cdots x_{nk_n})]) := h^{2i+j+n-1}x_{11}\cdots x_{1k_1}\cdots x_{n1}\cdots x_{nk_n}. \]
\end{prop}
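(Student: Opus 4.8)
The plan is to verify three things: that $\mathrm{N}$ is well-defined and linear, that it intertwines the brackets, and that it intertwines the differentials $d=\gamma\delta+\Delta$ on $\Lambda_{\gamma,\nu}[V]$ with the super-Laplacian $h\Delta$ on $\mathfrak{p}[V]$. Well-definedness is immediate: a cyclic word $[x_1\cdots x_k]$ is only invariant under the cyclic group, but its image in $S(V^*)$ is the fully symmetric monomial $x_1\cdots x_k$, so the (stronger) commutative relations subsume the cyclic ones, and likewise the symmetric Chevalley-Eilenberg product maps to the commutative product in $S(V^*)$. One checks that the Koszul signs are consistent and notes that, since each word has length $\geq 1$ and hence $n\geq 1$, the exponent $2i+j+n-1$ is always $\geq 0$ on $\Lambda_{\gamma,\nu}[V]$, so the image genuinely lands in $\mathfrak{p}[V]$.

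For the bracket I would use that the bracket on $\Lambda_{\gamma,\nu}[V]$ is $\gf[\gamma,\nu]$-bilinear and, via the Leibniz extension \eqref{eqn_CEbracket}, is determined by the bracket \eqref{eqn_bracket} of two single cyclic words $w,w'$. The key observation is that \eqref{eqn_bracket} contracts one letter of $w$ against one letter of $w'$ using $\innprod^{-1}$ and concatenates the remaining letters into a single cyclic word; after applying $\mathrm{N}$, which forgets the cyclic ordering, this is precisely the Poisson anti-bracket of the commutative monomials $\mathrm{N}(w),\mathrm{N}(w')$, a biderivation contracting one factor from each. Summing the Chevalley-Eilenberg bracket first over pairs of words and then over letters within each pair reproduces exactly the sum over all pairs of factors defining the Poisson bracket of the two full monomials. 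The $h$-bookkeeping then works because the exponent $E(i,j,n):=2i+j+n-1$ satisfies $E(i_1+i_2,j_1+j_2,p+q-1)=E(i_1,j_1,p)+E(i_2,j_2,q)$, reflecting that the bracket adds the $\gamma$- and $\nu$-degrees and lowers the symmetric order by one; this is the exact analogue of the $h^{-n}$ normalization appearing in the multiplication map \eqref{eqn_BVmult}.

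For the differential, the organizing principle is the defining identity \eqref{eqn_BVidentity}, which expresses the super-Laplacian of a product $m_1\cdots m_n$ as the sum of the internal Laplacians $\Delta(m_l)$ of each factor plus the pairwise brackets $\{m_l,m_{l'}\}$. I would match these two pieces against the two summands of $d=\gamma\delta+\Delta$: the internal term $\sum_l\Delta(m_l)\cdot(\text{rest})$ corresponds to the Movshev cobracket \eqref{eqn_cobracket}, which self-contracts two letters of a single word and splits it into two, so that after $\mathrm{N}$ multiplies the two pieces back together it reproduces the degree-lowering self-contraction of the super-Laplacian; the cross term $\sum_{l<l'}\{m_l,m_{l'}\}\cdot(\text{rest})$ corresponds to $\gamma\delta$ via the bracket compatibility just established. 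The $h$-powers are consistent with the single factor of $h$ in $h\Delta$: the cobracket raises the symmetric order by one (giving $h^{+1}$), while $\gamma\delta$ raises the $\gamma$-degree by one and lowers the order by one (giving $h^{2}\cdot h^{-1}=h^{+1}$). This is exactly the mechanism that reconciles the genus parameter $\gamma$ carrying weight $h^{2}$ with the super-Laplacian contributing only a single power of $h$.

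I expect the main obstacle to be purely the sign calculus: matching the explicit signs $(-1)^p$ in \eqref{eqn_bracket}, \eqref{eqn_cobracket}, \eqref{eqn_cediff} and \eqref{eqn_CEbracket} against the Koszul signs in the Poisson bracket and super-Laplacian, and in particular checking that the factor $\frac{1}{2}[1+(1\,2)]$ symmetrizing the two output words of the cobracket \eqref{eqn_cobracket} collapses, after the commutativization performed by $\mathrm{N}$, to exactly the self-contraction sum of the super-Laplacian, with the $\frac{1}{2}$ absorbed by the two-fold symmetrization. The odd-form signs, together with the even-form variant, then require the same bookkeeping repeated with the alternate conventions recorded in the remarks following \eqref{eqn_bracket} and \eqref{eqn_cobracket}.
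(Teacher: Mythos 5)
Your proposal is correct and follows essentially the same route as the paper, whose entire proof is the one-line observation that the claim is a straightforward consequence of the identity \eqref{eqn_BVidentity}; your write-up simply makes explicit the bookkeeping (matching $\gamma\delta+\Delta$ against the two terms of \eqref{eqn_BVidentity}, and the exponent identity for $h^{2i+j+n-1}$) that the paper leaves to the reader.
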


\begin{proof}
It is a straightforward consequence of the standard identity \eqref{eqn_BVidentity} for the differential graded Lie algebra $\mathfrak{p}[V]$ that $N$ is a map of differential graded Lie algebras.
\end{proof}

Now we proceed with our construction. Suppose that we now take a differential graded Frobenius algebra $A$ whose nondegenerate symmetric bilinear form $\innprod_A$ is \emph{even}. From this we may construct an even symmetric \emph{degenerate} bilinear form $\innprod_d$ on $V\otimes A$:
\begin{equation} \label{eqn_degform}
\langle v_1\otimes a_1, v_2\otimes a_2 \rangle_d:=(-1)^{a_1 (v_2+1)}\langle v_1,v_2 \rangle_V \langle a_1,d(a_2) \rangle_A.
\end{equation}
We will denote the corresponding quadratic superfunction on $V\otimes A$ by $\sigma_d$.

\begin{lemma} \label{lem_brackvanish}
For any $\mathbf{x}:=\gamma^i\nu^j [(x_{11}\cdots x_{1k_1})\cdots(x_{n1}\cdots x_{nk_n})]\in\Lambda_{\gamma,\nu}[V]$,
\[ \{\sigma_d,\Phi_A(\mathbf{x})\}=0. \]
\end{lemma}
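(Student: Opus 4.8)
The plan is to show that $\sigma_d$, the quadratic superfunction associated to the degenerate form $\innprod_d$ on $V\otimes A$, lies in the image of $\Phi_A$, and more specifically that $\sigma_d = \Phi_A(\mathbf{d})$ for a distinguished element $\mathbf{d}\in\Lambda_{\gamma,\nu}[V]$ that is central for the bracket. Since $\Phi_A$ is a map of differential graded Lie algebras by Theorem~\ref{thm_dglamap}, we have $\{\sigma_d,\Phi_A(\mathbf{x})\} = \{\Phi_A(\mathbf{d}),\Phi_A(\mathbf{x})\} = \Phi_A(\{\mathbf{d},\mathbf{x}\})$, so the statement follows immediately once we exhibit $\mathbf{d}$ and verify $\{\mathbf{d},\mathbf{x}\}=0$ in $\Lambda_{\gamma,\nu}[V]$. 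The natural candidate for $\mathbf{d}$ is the quadratic element of $\mathfrak{h}[V]$ corresponding to the differential on $A$ together with the symplectic form on $V$; unwinding the definition of $\pi^{g,b}$ and the tensor $\alpha_1^{0,0}|_{[2]}$, I expect $\sigma_d$ to be exactly $\Phi_A$ applied to the quadratic Hamiltonian whose associated endomorphism of $V$ is the identity (i.e.\ the element generating the copy of $\mathfrak{pe}[V]$ acting as a scalar), with the differential on $A$ supplying the degeneration.

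First I would write $\sigma_d$ explicitly as a quadratic superfunction on $V\otimes A$ using the correspondence between symmetric bilinear forms and quadratic functions recalled in the notation section, applied to the form $\innprod_d$ in \eqref{eqn_degform}. Next I would identify the element $\mathbf{d}\in\Lambda_{\gamma,\nu}[V]$ whose image under $\Phi_A$ is $\sigma_d$: concretely, since $\innprod_d$ factors as $\innprod_V$ tensored with $\langle -,d(-)\rangle_A$, and since $\Phi_A$ is built by tensoring $V$-functions against the $\alpha_n^{g,b}$, I expect the relevant $A$-tensor to be precisely the component $\alpha_1^{0,0}|_{[2]}$ contracted against $d$, which by formula \eqref{eqn_product} and the compatibility $\langle d(x),y\rangle + (-1)^x\langle x,d(y)\rangle=0$ reproduces the differential. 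This step is essentially a bookkeeping computation matching signs in \eqref{eqn_degform}, \eqref{eqn_tensorzero}, and the definition of $\pi^{g,b}$.

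The main obstacle, and the heart of the argument, is verifying that $\{\mathbf{d},-\}$ vanishes identically on $\Lambda_{\gamma,\nu}[V]$. Here I would use the fact that the bracket $\{-,-\}$ on $\Lambda_{\gamma,\nu}[V]$ is induced from the Kontsevich bracket \eqref{eqn_bracket} on $\mathfrak{h}[V]$ via the Leibniz extension \eqref{eqn_CEbracket}, so it suffices to check that $\{\mathbf{d},h\}=0$ for any single cyclic word $h\in\mathfrak{h}_{\geq 1}[V]$. The element $\mathbf{d}$ is quadratic in the $V^*$-variables, and bracketing a quadratic Hamiltonian against a cyclic word acts, through the inverse symplectic form $\innprod^{-1}$, as a derivation of the cyclic word by the corresponding endomorphism of $V$. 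Because $\mathbf{d}$ encodes the differential $d$ on $A$ tensored with the identity on $V$, and the differential $\delta+\Delta$ already accounts for this action, the bracket with $\mathbf{d}$ should reduce to the statement that the endomorphism of $V\otimes A$ underlying $\sigma_d$ annihilates the symplectic form $\innprod_{V\otimes A}$ — which is exactly the compatibility condition $\langle d(a),b\rangle_A + (-1)^a\langle a,d(b)\rangle_A = 0$ assumed for the Frobenius structure. Thus the vanishing of the bracket is a direct translation of the $d$-invariance of $\innprod_A$, and the careful sign tracking in \eqref{eqn_bracket} and \eqref{eqn_degform} is where I expect the real work to lie.
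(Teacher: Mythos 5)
Your strategy breaks down at its very first step: $\sigma_d$ does not lie in the image of $\Phi_A$. By the defining diagram \eqref{fig_dglamap}, the only quadratic elements $\Phi_A$ can produce are $\pi^{0,0}$ of a length-two word in $V^*$ tensored with the \emph{fixed} tensor $\alpha_1^{0,0}|_{[2]}$, and unwinding \eqref{eqn_tensorzero} and \eqref{eqn_product} shows that $\alpha_1^{0,0}|_{[2]}$ is just $\pm\langle -,-\rangle_A$ (the higher $\alpha_1^{g,b}|_{[2]}$ insert $\beta^b\gamma^g$, which is an even multiplication operator, and in any case land in a different $(\gamma,\nu)$-graded piece). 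The function $\sigma_d$ corresponds via \eqref{eqn_degform} to $\innprod_V$ tensored with $\langle -,d(-)\rangle_A$; since $d$ is odd and nilpotent it cannot be absorbed into the $V$-factor or realized as $\beta^b\gamma^g$, so there is no $\mathbf{d}$ with $\Phi_A(\mathbf{d})=\sigma_d$, and the reduction to $\Phi_A(\{\mathbf{d},\mathbf{x}\})$ is unavailable.

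More seriously, the vanishing mechanism you invoke is not the right one. A quadratic Hamiltonian whose associated endomorphism annihilates the symplectic form is precisely an element of $\mathfrak{pe}[V\otimes A]$, and its bracket with a cyclic word is the action of that endomorphism as a derivation of the word --- this is exactly the (generally nonzero) adjoint action used to define the relative complex $C_\bullet(-,\mathfrak{pe})$, so no nonzero quadratic element is central. The compatibility $\langle d(a),b\rangle_A+(-1)^a\langle a,d(b)\rangle_A=0$ only guarantees that $\sigma_d$ is a well-defined even quadratic function; it does not kill the bracket. What actually happens, and what the paper computes, is that $\{\sigma_d,-\}$ acts on $\Phi_A(\mathbf{x})$ as the derivation $1_V\otimes d$, i.e.\ it applies $d^*$ to the tensor $\alpha_n^{i,j}|_{[k_1,\ldots,k_n]}$ while leaving the $V$-tensor untouched, and the result vanishes because the tensors $\alpha_n^{g,b}$ are $d$-closed --- Item \eqref{item_tensorclosed} of Lemma \ref{lem_otftid}, i.e.\ the axiom that the open topological field theory is a morphism of modular operads commuting with differentials. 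That $d$-closedness is the essential input to the lemma, and your argument never invokes it; the issue is not sign bookkeeping but a missing idea.
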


\begin{proof}
One may check by means of a straightforward calculation that
\begin{displaymath}
\begin{split}
\{\sigma_d,\Phi_A(\mathbf{x})\} &= \frac{1}{2}\left\{\pi^{0,0}\left[\begin{subarray}{c} \innprod_V \\ \otimes \\ \langle-,d-\rangle_A \end{subarray}\right],\pi^{i,j}\left[\begin{subarray}{c} (x_{11}\otimes\cdots\otimes x_{1k_1})\otimes\cdots\otimes(x_{n1}\otimes\cdots\otimes x_{nk_n}) \\ \otimes \\ \alpha_n^{i,j}|_{[k_1,\ldots,k_n]} \end{subarray}\right]\right\} \\
&= \pi^{i,j}\left[\begin{subarray}{c} (x_{11}\otimes\cdots\otimes x_{1k_1})\otimes\cdots\otimes(x_{n1}\otimes\cdots\otimes x_{nk_n}) \\ \otimes \\ d^*(\alpha_n^{i,j}|_{[k_1,\ldots,k_n]}) \end{subarray}\right] =0.
\end{split}
\end{displaymath}
Where the last line follows from Item \eqref{item_tensorclosed} of Lemma \ref{lem_otftid}.
\end{proof}

We may now proceed to give a definition of our cocycle in $C^\bullet(\Lambda_{\gamma,\nu}[V])$. Suppose now that our differential graded Frobenius algebra $A$ is \emph{contractible}, and that it comes equipped with a choice of a Hodge decomposition $s:A\to A$. Such a choice of Hodge decomposition is equivalent to the choice of a subspace $L=\im(s)\subset A$ that is isotropic with respect to $\innprod_A$ and which is complementary to the isotropic subspace $\im(d)$. Consequently, the bilinear form $\langle-,d-\rangle_A$ is \emph{nondegenerate} on $L$. This subspace $L$ gives rise to a Lagrangian subspace $\tilde{L}:=V\otimes L$ of the symplectic vector space $(V\otimes A,\innprod_{V\otimes A})$.

\begin{defi}
From this data we define a cochain $Q_A\in C^\bullet(\Lambda_{\gamma,\nu}[V])$ with coefficients in $\mathbb{R}[h]$ by the formula
\begin{equation} \label{eqn_cocycleformula}
Q_A(\mathbf{x}_1\cdots\mathbf{x}_k):= \frac{\int_{\tilde{L}} \mathrm{M}\mathrm{N}\Phi_A(\mathbf{x}_1\cdots\mathbf{x}_k)\cdot e^{-\frac{1}{h}\sigma_d}}{\int_{\tilde{L}} e^{-\frac{1}{h}\sigma_d}}.
\end{equation}
In other words, $Q_A$ is the composition
\[ Q_A:C_\bullet(\Lambda_{\gamma,\nu}[V]) \overset{\Phi_A}{\longrightarrow} C_\bullet(\Lambda_{\gamma,\nu}[V\otimes A]) \overset{\mathrm{N}}{\longrightarrow} C_\bullet(\mathfrak{p}[V\otimes A]) \overset{\mathrm{M}}{\longrightarrow} \mathfrak{p}[V\otimes A] \overset{f\mapsto fe^{-\frac{1}{h}\sigma_d}}{\longrightarrow} \mathfrak{p}_{\frac{1}{h}\sigma_d}[V\otimes A] \longrightarrow \mathbb{R}[h], \]
Where $\Phi_A$ is defined by \eqref{eqn_dglamap}, $\mathrm{N}$ is defined by \eqref{eqn_nctocomm}, $\mathrm{M}$ is defined by \eqref{eqn_BVmult} and the rightmost map is defined by \eqref{eqn_bvcocycle}.
\end{defi}

\begin{theorem} \label{thm_cocycle}
This cochain $Q_A$ is a \emph{cocycle} whose cohomology class does not depend on the choice of a Hodge decomposition for the algebra $A$.
\end{theorem}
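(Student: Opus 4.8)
The plan is to exploit that $Q_A$ is, by construction, the composite of the Batalin--Vilkovisky integration map \eqref{eqn_bvcocycle} with the chain map $\mathrm{M}\mathrm{N}\Phi_A$, and to reduce the cocycle property to the interplay between the super-Laplacian $h\Delta$ and the exponential twist by $\sigma_d$. First I would record that $\mathrm{M}\mathrm{N}\Phi_A : C_\bullet(\Lambda_{\gamma,\nu}[V]) \to \mathfrak{p}[V\otimes A]$ is a map of complexes intertwining the Chevalley--Eilenberg differential $\delta$ with $h\Delta$: indeed $\Phi_A$ is a map of differential graded Lie algebras (Theorem \ref{thm_dglamap}) and so induces a chain map on Chevalley--Eilenberg complexes, $\mathrm{N}$ is likewise a dg Lie map (Proposition \ref{prop_nctocomm}) inducing a chain map, and $\mathrm{M}$ is a map of complexes by \eqref{eqn_BVmult}. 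Writing $f := \mathrm{M}\mathrm{N}\Phi_A(c)$, the numerator of $Q_A(\delta c)$ becomes $\int_{\tilde L}(h\Delta f)\,e^{-\frac{1}{h}\sigma_d}$; since the denominator is a scalar, it suffices to show this integral vanishes for every chain $c$.

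The heart of the argument is the standard conjugation identity derived from the defining relation \eqref{eqn_BVidentity}. Setting $S := \tfrac{1}{h}\sigma_d$ and using that $\sigma_d$ is even, one computes
\[ (h\Delta f)\,e^{-S} = h\Delta\!\left(f e^{-S}\right) + \{f,\sigma_d\}\,e^{-S} - (-1)^{|f|} f\left(\tfrac{1}{2h}\{\sigma_d,\sigma_d\} - \Delta\sigma_d\right)e^{-S}. \]
I would then kill the three terms in turn. The first integrates to zero because the map \eqref{eqn_bvcocycle} is an $h\Delta$-cocycle (Theorem \ref{thm_BVcocycle}). The second vanishes even before integration: Lemma \ref{lem_brackvanish} gives $\{\sigma_d,\Phi_A(\mathbf{x})\}=0$, and since $\mathrm{N}$ is a Lie algebra map sending the noncommutative quadratic $\sigma_d$ to its commutative counterpart in $\mathfrak{p}[V\otimes A]$ we get $\{\sigma_d,\mathrm{N}\Phi_A(\mathbf{x})\}=0$; as $\mathrm{M}$ is, up to a power of $h$, multiplication and $\{\sigma_d,-\}$ is a biderivation of the product, this propagates to $\{f,\sigma_d\}=0$. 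The third vanishes because $\sigma_d$ satisfies the quantum master equation $\Delta\sigma_d=\tfrac12\{\sigma_d,\sigma_d\}$ with both sides zero: $\{\sigma_d,\sigma_d\}=0$ since the Hamiltonian of $\sigma_d$ is the differential $d$ of $A$ and $d^2=0$, while $\Delta\sigma_d=0$ since it is proportional to the supertrace of the odd operator $d$. This establishes that $Q_A$ is a cocycle.

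For independence of the Hodge decomposition, I would observe that a choice of $s$ is the same as the Lagrangian $\tilde L = V\otimes\im(s)$, on which $\langle-,d-\rangle_A$ is nondegenerate, and that any two such Lagrangians have the same type (each is $V$ tensored with a complement of $\im(d)$, and $A$ is contractible). Since $\sigma_d$ solves the quantum master equation, the second part of Theorem \ref{thm_BVcocycle} shows that the integration maps \eqref{eqn_bvcocycle} for $\tilde L$ and $\tilde L'$ are cohomologous in $\mathrm{Hom}(\mathfrak{p}_{\frac{1}{h}\sigma_d}[V\otimes A],\mathbb{R}[h])$, so their difference has the form $\psi\circ(h\Delta)$ for some $\psi$. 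Precomposing with the fixed chain map $\mathrm{M}\mathrm{N}\Phi_A$ and using $h\Delta\circ\mathrm{M}\mathrm{N}\Phi_A = \mathrm{M}\mathrm{N}\Phi_A\circ\delta$ exhibits $Q_A^{\tilde L} - Q_A^{\tilde L'}$ as the coboundary of $\psi\circ\mathrm{M}\mathrm{N}\Phi_A$, whence the two cocycles are cohomologous.

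The hard part will be the conjugation computation and the bookkeeping that makes the three terms vanish: in particular pinning down the powers of $h$ so that the correction terms appear exactly as displayed, verifying that $\mathrm{N}$ carries the noncommutative quadratic to the classical $\sigma_d$ appearing in the exponent, and confirming both halves of the master equation (the identity $d^2=0$ and the vanishing of the supertrace of $d$). Everything else is a formal assembly of results already in hand.
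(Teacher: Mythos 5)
Your proposal follows essentially the same route as the paper: both decompose $Q_A(\delta c)$ via the Batalin--Vilkovisky identity \eqref{eqn_BVidentity} into the same three terms, killing the exact term by Theorem \ref{thm_BVcocycle}, the bracket term by Lemma \ref{lem_brackvanish}, and the master-equation term by $\Delta(\sigma_d)=0$ (parity) together with $\{\sigma_d,\sigma_d\}=0$ (from $d^2=0$), and both deduce Hodge-independence from the second part of Theorem \ref{thm_BVcocycle} applied to the Lagrangians $\tilde L=V\otimes\im(s)$. Your explicit propagation of the vanishing bracket through $\mathrm{N}$ and $\mathrm{M}$ via the derivation property is a slightly more careful spelling-out of a step the paper leaves implicit, but the argument is the same.
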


\begin{proof}
From the identity \eqref{eqn_BVidentity} we deduce
\begin{equation} \label{eqn_vanish}
\begin{split}
Q_A\delta(\mathbf{x}_1\cdots\mathbf{x}_k) =& \frac{\int_{\tilde{L}} h\Delta[\mathrm{M}\mathrm{N}\Phi_A(\mathbf{x}_1\cdots\mathbf{x}_k)]\cdot e^{-\frac{1}{h}\sigma_d}}{\int_{\tilde{L}} e^{-\frac{1}{h}\sigma_d}} \\
=& \frac{\int_{\tilde{L}} h\Delta[\mathrm{M}\mathrm{N}\Phi_A(\mathbf{x}_1\cdots\mathbf{x}_k)\cdot e^{-\frac{1}{h}\sigma_d}]}{\int_{\tilde{L}} e^{-\frac{1}{h}\sigma_d}} \\
& \pm \frac{\int_{\tilde{L}} \mathrm{M}\mathrm{N}\Phi_A(\mathbf{x}_1\cdots\mathbf{x}_k)\cdot h\Delta[e^{-\frac{1}{h}\sigma_d}]}{\int_{\tilde{L}} e^{-\frac{1}{h}\sigma_d}} \\
& \pm \frac{1}{h}\frac{\int_{\tilde{L}} \{\mathrm{M}\mathrm{N}\Phi_A(\mathbf{x}_1\cdots\mathbf{x}_k),\sigma_d\}\cdot e^{-\frac{1}{h}\sigma_d}}{\int_{\tilde{L}} e^{-\frac{1}{h}\sigma_d}}
\end{split}
\end{equation}

Since $\sigma_d$ is even, it follows purely from reasons of degree that $\Delta(\sigma_d)=0$, and one can check that the equation $\{\sigma_d,\sigma_d\}=0$ is a consequence of the identity $d^2=0$. From these two facts we may deduce
\[ h\Delta(e^{-\frac{1}{h}\sigma_d})= \frac{1}{h}(h\Delta(\sigma_d)-\{\sigma_d,\sigma_d\})e^{-\frac{1}{h}\sigma_d} = 0. \]
It follows that the second term in \eqref{eqn_vanish} vanishes. The third term in \eqref{eqn_vanish} vanishes by Lemma \ref{lem_brackvanish} and the first term vanishes by Theorem \ref{thm_BVcocycle}. Consequently, $Q_A$ is a cocycle. Since a different choice of Hodge decomposition of $A$ simply amounts to an alternative choice of Lagrangian subspace $\tilde{L}\subset V\otimes A$, it follows again from Theorem \ref{thm_BVcocycle} that the cohomology class of this cocycle does not depend on the choice of a Hodge decomposition.
\end{proof}

\begin{rem}
Consequently, any differential graded Frobenius algebra $A$ whose bilinear form $\innprod_A$ is even leads to a well defined cohomology class in $C^\bullet(\Lambda_{\gamma,\nu}[V])$.
\end{rem}

There is a formula, known as ``Wick's formula'', which allows us to compute the integral \eqref{eqn_cocycleformula} by a sum whose terms are defined by applying the bilinear form $\innprod_d^{-1}$ to the polynomial $\mathrm{M}\mathrm{N}\Phi_A(\mathbf{x}_1\cdots\mathbf{x}_k)$ in all possible ways. One consequence of this formula is that the cocycle $Q_A$ is well-defined modulo the action of $\mathfrak{pe}[V]$, and gives rise to a \emph{relative} cocycle, which we will also denote by $Q_A$, in $C^\bullet(\Lambda_{\gamma,\nu}[V],\mathfrak{pe}[V])$.

Consider now the differential graded Lie algebra $\Lambda_{\gamma,\nu}^1:= \dilim{n}{\Lambda_{\gamma,\nu}[W^1_{n|n}]}$. One can check that the cocycles $Q_{A;n}$ in $C^\bullet(\Lambda_{\gamma,\nu}[W^1_{n|n}],\mathfrak{pe}[W^1_{n|n}])$ defined by \eqref{eqn_cocycleformula} are compatible with the natural maps between the $W^1_{n|n}$, and consequently give rise to a cocycle $Q_A\in C^\bullet(\Lambda_{\gamma,\nu}^1,\mathfrak{pe})$. By Theorem \ref{thm_modspc} this gives rise to a whole family of cocycles (with coefficients in $\gf[h]$) on the Looijenga compactifications $\lcmp$ of the moduli space for varying $g$ and $n$.

A similar remark applies in the situation that $A$ comes equipped with an \emph{odd} bilinear form. In this case, in exactly the same way as before and using exactly the same formulae, we may define cocycles $Q_{A;n,m}$ in $C^\bullet(\Lambda_{\gamma,\nu}[W^0_{2n|m}],\mathfrak{osp}[W^0_{2n|m}])$ which ultimately give rise to a cocycle $Q_A\in C^\bullet(\Lambda_{\gamma,\nu}^0,\mathfrak{osp})$. Again, by Theorem \ref{thm_modspc} this leads to a family of cohomology classes with twisted coefficients on the Looijenga compactifications of the moduli space.

Now we consider under what conditions this construction will produce a cocycle on the Kontsevich compactification $\kcmp$ of the moduli space. Consider the cochain $Q_A\in C^\bullet(\Lambda_{\gamma,\nu}[V])$. There is a natural inclusion
\[ \Lambda_{\gamma}[V]\hookrightarrow\Lambda_{\gamma,\nu}[V] \]
of vector spaces, which is \emph{not} a map of differential graded Lie algebras. Consequently, we may restrict this cochain to a cochain $Q_A\in C^\bullet(\Lambda_{\gamma}[V])$.

\begin{prop} \label{prop_betavanish}
If the map $\beta$ of Equation \eqref{eqn_beta} vanishes, then the cochain $Q_A\in C^\bullet(\Lambda_{\gamma}[V])$ is a \emph{cocycle} whose cohomology class does not depend on the choice of Hodge decomposition for $A$.
\end{prop}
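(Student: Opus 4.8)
The plan is to exploit the fact that the hypothesis $\beta=0$ forces $\Phi_A$ to annihilate the part of $\Lambda_{\gamma,\nu}[V]$ of positive $\nu$-degree, so that although the inclusion $\iota\colon\Lambda_{\gamma}[V]\hookrightarrow\Lambda_{\gamma,\nu}[V]$ is not a map of differential graded Lie algebras, the \emph{composite} of $\Phi_A$ with this inclusion nevertheless is. First I would record the key observation. By formula \eqref{eqn_tensorformula} the tensor $\alpha_n^{g,b}$ is obtained from $\alpha_n^{0,0}$ by applying $\beta^b\gamma^g$ to one of its arguments; hence if $\beta=0$ then $\alpha_n^{g,b}=0$ for every $b\geq 1$. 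Since $\Phi_A$ sends the summand $\pi^{g,b}(\cdots)=\gamma^g\nu^b[\cdots]$ of $\Lambda_{\gamma,\nu}[V]$ to $\pi^{g,b}(\cdots\otimes\alpha_n^{g,b})$, strictly preserving the bidegree in $(\gamma,\nu)$, it follows that $\Phi_A$ vanishes identically on the subspace of $\nu$-degree $\geq 1$, which is precisely the kernel of the projection $p\colon\Lambda_{\gamma,\nu}[V]\to\Lambda_{\gamma}[V]$ that sets $\nu=0$.

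Next I would use this to produce a genuine map of differential graded Lie algebras out of $\Lambda_{\gamma}[V]$. Because $\Phi_A$ kills $\ker p$, it descends to a linear map $\bar\Phi_A\colon\Lambda_{\gamma}[V]\to\Lambda_{\gamma,\nu}[V\otimes A]$ with $\Phi_A=\bar\Phi_A\circ p$; equivalently, since $p\iota=\id$, we have $\bar\Phi_A=\Phi_A\circ\iota$. As $p$ is a \emph{surjective} map of differential graded Lie algebras and $\Phi_A$ is one by Theorem \ref{thm_dglamap}, the relation $\Phi_A=\bar\Phi_A\circ p$ forces $\bar\Phi_A$ to respect both the bracket and the differential. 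This is the crux of the argument: it is exactly the failure of $\iota$ to be such a map that would otherwise obstruct restricting a cocycle, and the vanishing of $\Phi_A$ on $\ker p$ is what repairs this. Consequently $\bar\Phi_A$ induces a genuine chain map $\bar\Phi_{A,*}\colon C_\bullet(\Lambda_{\gamma}[V])\to C_\bullet(\Lambda_{\gamma,\nu}[V\otimes A])$ on Chevalley--Eilenberg complexes, and by functoriality of $S(-)$ the restriction $Q_A\in C^\bullet(\Lambda_{\gamma}[V])$ equals the composite of $\bar\Phi_{A,*}$ with the same chain of maps $\mathrm{N}$, $\mathrm{M}$, the twist $f\mapsto fe^{-\frac1h\sigma_d}$, and integration over $\tilde{L}$ that defines $Q_A$ in \eqref{eqn_cocycleformula}.

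With this factorization in hand, the cocycle property should follow by rerunning the computation in the proof of Theorem \ref{thm_cocycle} verbatim, now with $\bar\Phi_{A,*}$ in place of $\Phi_{A,*}$. Since $\bar\Phi_{A,*}$ is a chain map and $\mathrm{M}\mathrm{N}$ converts the Chevalley--Eilenberg differential into the super-Laplacian $h\Delta$, evaluating the restricted $Q_A$ on a boundary produces, via identity \eqref{eqn_BVidentity}, the same three terms as before: the first vanishes because integration over $\tilde{L}$ is an $h\Delta$-cocycle (Theorem \ref{thm_BVcocycle}), the second because $h\Delta(e^{-\frac1h\sigma_d})=0$ by the quantum master equation for $\sigma_d$, and the third because each $\bar\Phi_A(\mathbf{x}_i)=\Phi_A(\iota\mathbf{x}_i)$ lies in the image of $\Phi_A$, so that Lemma \ref{lem_brackvanish} gives $\{\sigma_d,\Phi_A(\iota\mathbf{x}_i)\}=0$; the full bracket with $\sigma_d$ then vanishes since $\{-,\sigma_d\}$ acts as a derivation on the product in $\mathfrak{p}[V\otimes A]$ and $\mathrm{N}$ intertwines the brackets. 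Finally, independence of the Hodge decomposition should follow exactly as in Theorem \ref{thm_cocycle}: a different decomposition only replaces the Lagrangian $\tilde{L}=V\otimes\im(s)$ by another Lagrangian of the same type, and Theorem \ref{thm_BVcocycle} then shows the two resulting cocycles are cohomologous after composition with the fixed chain map $\bar\Phi_{A,*}$.

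I expect the main obstacle to be the careful bookkeeping behind the factorization $\Phi_A=\bar\Phi_A\circ p$ — in particular, confirming that $\Phi_A$ strictly preserves the $(\gamma,\nu)$-bidegree, so that the vanishing of $\alpha_n^{g,b}$ for $b\geq 1$ genuinely translates into vanishing on $\ker p$, and verifying that the induced $\bar\Phi_A$ really is compatible with both the bracket \eqref{eqn_CEbracket} and the differential $d=\gamma\cdot\delta+\Delta$. Once this structural point is secured, the remainder is a formal consequence of the machinery already assembled for Theorem \ref{thm_cocycle} together with Lemma \ref{lem_brackvanish}.
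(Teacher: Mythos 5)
Your proposal is correct and follows essentially the same route as the paper: the crucial point in both is that $\beta=0$ forces $\alpha_n^{g,b}=0$ for $b\geq 1$, so that $\Phi_A$ annihilates everything of positive $\nu$-degree, after which the arguments of Theorem \ref{thm_cocycle} apply unchanged. Your packaging of this as a factorization $\Phi_A=\bar\Phi_A\circ p$ through the projection setting $\nu=0$ (which is a map of differential graded Lie algebras by definition of the structure on $\Lambda_\gamma[V]$) is just a slightly more structural phrasing of the paper's key formula $\Phi_A[\delta_\gamma(\mathbf{x}_1\cdots\mathbf{x}_k)]=\Phi_A[\delta_{\gamma,\nu}(\mathbf{x}_1\cdots\mathbf{x}_k)-O(\nu\geq 1)]$ together with $\Phi_A[O(\nu\geq1)]=0$.
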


\begin{rem}
In this case, by Theorem \ref{thm_modspc} this leads to a family of cohomology classes on the Kontsevich compactification $\kcmp$ of the moduli space.
\end{rem}

\begin{proof}
Let us denote the differential in $C_\bullet(\Lambda_{\gamma,\nu}[V])$ by $\delta_{\gamma,\nu}$ and the differential in $C_\bullet(\Lambda_{\gamma}[V])$ by $\delta_{\gamma}$, then the key formula is
\begin{equation} \label{eqn_betavanish}
\Phi_A[\delta_{\gamma}(\mathbf{x}_1\cdots\mathbf{x}_k)] = \Phi_A[\delta_{\gamma,\nu}(\mathbf{x}_1\cdots\mathbf{x}_k) -O(\nu\geq 1)]
\end{equation}
where $O(\nu\geq 1)$ is a sum of terms involving powers of $\nu$ of order $\geq 1$. However, since $\beta=0$, by Equation \eqref{eqn_tensorformula} we have
\[ \alpha_n^{g,b}=0,\quad \text{for all } b\geq 1. \]
By Figure \eqref{fig_dglamap} it follows from this formula that $\Phi_A[O(\nu\geq 1)]=0$. Now Equation \eqref{eqn_betavanish} and the arguments of Theorem \ref{thm_cocycle} suffice to establish the claim of the proposition.
\end{proof}

\section{Characteristic classes} \label{sec_char}

In this section we will recall elementary material about characteristic classes, as described in \cite{hamqme}, and apply this to the construction of \emph{homology} classes in compactifications of the moduli space of curves. The purpose of the sections that follow will be to describe what happens when we evaluate the cohomology classes produced in Section \ref{sec_ncbv} upon the homology classes produced in this section and to express the result as the perturbative expansion of some functional integral over a finite space of fields. The first notion we will need to recall is that of the Maurer-Cartan moduli space.

\begin{defi}
Let $\mathfrak{g}$ be a \emph{pronilpotent} differential graded Lie algebra. We define its Maurer-Cartan set by
\[ \mcs{\mathfrak{g}}:=\left\{x\in\mathfrak{g}_0:dx+\frac{1}{2}[x,x]=0\right\}.\]
To define the Maurer-Cartan moduli space $\mcm{\mathfrak{g}}$, we must consider the action of $\mathfrak{g}_1$ on $\mcs{\mathfrak{g}}$ defined by the equation
\[ \exp(y)\cdot x:=x+\sum_{n=0}^\infty\frac{1}{(n+1)!}[\ad y]^n(dy+[y,x]). \]
This action is well-defined as $\mathfrak{g}$ is pronilpotent. The Maurer-Cartan moduli space $\mcm{\mathfrak{g}}$ is the quotient of the Maurer-Cartan set $\mcs{\mathfrak{g}}$ by this action.
\end{defi}

In order to consider characteristic classes of Maurer-Cartan elements, we will need to introduce a completed version of the Chevalley-Eilenberg complex defined by Definition \ref{def_cecomp}.

\begin{defi}
The \emph{completed} Chevalley-Eilenberg complex $\widehat{C}_\bullet(\mathfrak{g})$ of a pronilpotent differential graded algebra $\mathfrak{g}$ is the complex whose underlying vector space is
\[ \widehat{C}_\bullet(\mathfrak{g}):=\prod_{i=0}^\infty [\mathfrak{g}^{\cotimes i}]_{\mathbb{S}_i} \]
and whose differential $\delta$ is defined by the same equation, Equation \eqref{eqn_cediff}, as in Definition \ref{def_cecomp}. The homology of this complex will be referred to as the \emph{completed Chevalley-Eilenberg homology of $\mathfrak{g}$ with trivial coefficients} and denoted by $\widehat{H}_\bullet(\mathfrak{g})$. The notion of the completed Chevalley-Eilenberg complex extends in an obvious manner to \emph{relative} Chevalley-Eilenberg homology.
\end{defi}

There is a well-known way \cite{ss} to produce homology classes in the completed Chevalley-Eilenberg complex of a pronilpotent differential graded Lie algebra, by exponentiating elements in the Maurer-Cartan set. Given a Maurer-Cartan element $x\in\mcs{\mathfrak{g}}$, we define its characteristic class $\tilde{\ch}(x)\in\widehat{C}_\bullet(\mathfrak{g})$ by the equation
\[ \tilde{\ch}(x):=\exp(x)=1 + x + \frac{1}{2}x\cdot x +\cdots +\frac{1}{n!}x^n+\cdots. \]
One can verify \cite{hamqme} that $\tilde{\ch}(x)$ is a cycle, and that equivalent Maurer-Cartan elements produces homologous cycles. This leads to the following theorem.

\begin{theorem} \label{thm_char}
For any pronilpotent differential graded algebra $\mathfrak{g}$, the above construction induces a well-defined map
\[ \tilde{\ch}:\mcm{\mathfrak{g}} \to \widehat{H}_\bullet(\mathfrak{g}). \]
\end{theorem}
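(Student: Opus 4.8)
The plan is to establish the two facts asserted in the text immediately preceding the statement: that $\tilde{\ch}(x)=\exp(x)$ is a $\delta$-cycle whenever $x\in\mcs{\mathfrak{g}}$, and that gauge-equivalent Maurer--Cartan elements yield homologous cycles. Together these imply that $x\mapsto[\exp(x)]$ descends from $\mcs{\mathfrak{g}}$ to a well-defined map $\mcm{\mathfrak{g}}\to\widehat{H}_\bullet(\mathfrak{g})$. Throughout I would note that $\exp(x)=\sum_n\frac{1}{n!}x^n$ is a legitimate element of the \emph{completed} complex $\widehat{C}_\bullet(\mathfrak{g})$, since its degree-$n$ component $\frac{1}{n!}x^n$ sits in the factor $[\mathfrak{g}^{\cotimes n}]_{\mathbb{S}_n}$; pronilpotence is what guarantees that the gauge action, and the manipulations below, make sense.

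First I would verify the cycle property by a direct computation with the Chevalley--Eilenberg differential \eqref{eqn_cediff}. Applying $\delta$ to $\frac{1}{n!}x^n$, the $\binom{n}{2}$ bracket terms all coincide (up to Koszul sign) and the $n$ differential terms all coincide, producing $\frac{1}{2(n-2)!}\{x,x\}x^{n-2}+\frac{1}{(n-1)!}d(x)x^{n-1}$; here the terms genuinely coincide because $x$ has definite parity. Summing over $n$ and factoring out $\exp(x)$ yields $\delta(\exp(x))=\bigl(d(x)+\tfrac12\{x,x\}\bigr)\cdot\exp(x)$, which vanishes precisely by the Maurer--Cartan equation. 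The delicate point is the bookkeeping of Koszul signs together with the translation, via the parity reversion $\Pi$, between the odd bracket $\{-,-\}$ of \eqref{eqn_cediff} and the conventional bracket $[-,-]$ of the Maurer--Cartan equation; one must confirm that the combinatorial coefficients and signs conspire to reproduce exactly the Maurer--Cartan expression.

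For the second step I would use a homotopy argument built on a Cartan-type identity. I introduce the insertion operator $\iota_y\colon\widehat{C}_\bullet(\mathfrak{g})\to\widehat{C}_{\bullet+1}(\mathfrak{g})$ given by multiplication, $\iota_y(c):=y\cdot c$, and interpolate between gauge-equivalent elements $x$ and $x'=\exp(y)\cdot x$ by the path $x_t:=\exp(ty)\cdot x$, which is the time-$t$ flow of the gauge vector field and hence satisfies $\dot x_t=d(y)+[y,x_t]$ and remains in $\mcs{\mathfrak{g}}$ for all $t$. A direct computation analogous to the first step gives the graded commutator $[\delta,\iota_y](\exp(x_t))=\bigl(d(y)+\{y,x_t\}\bigr)\cdot\exp(x_t)$, which, under the same $\Pi$-dictionary, equals $\dot x_t\cdot\exp(x_t)=\frac{d}{dt}\exp(x_t)$. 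Since $\exp(x_t)$ is a cycle by the first step, the term $\iota_y\delta(\exp(x_t))$ drops out, leaving $\frac{d}{dt}\exp(x_t)=\delta\iota_y(\exp(x_t))=\delta(y\cdot\exp(x_t))$. Integrating over $t\in[0,1]$ then gives $\exp(x')-\exp(x)=\delta\!\left(\int_0^1 y\cdot\exp(x_t)\,dt\right)$, so the two cycles are homologous and the map descends to $\mcm{\mathfrak{g}}$.

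The main obstacle I anticipate is not conceptual but the careful sign and parity accounting in both direct computations — in particular, matching the odd Chevalley--Eilenberg bracket against the even bracket of the Maurer--Cartan and gauge formulae under parity reversion, and verifying the Cartan identity $[\delta,\iota_y]=(\text{gauge flow})$ with correct signs in the $\mathbb{Z}/2\mathbb{Z}$-graded setting. A secondary technical point is justifying the componentwise differentiation and integration in $t$ and their interchange with $\delta$ inside the completed complex $\widehat{C}_\bullet(\mathfrak{g})$; this is legitimate because in each fixed Chevalley--Eilenberg degree the relevant expression is, by pronilpotence, a finite polynomial expression in $x_t$ depending smoothly on $t$.
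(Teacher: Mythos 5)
Your proposal is correct and follows exactly the two-step outline the paper itself asserts (that $\exp(x)$ is a $\delta$-cycle by the Maurer--Cartan equation, and that gauge-equivalent elements give homologous cycles), the details of which the paper defers to \cite{hamqme} rather than proving in situ. Your direct computation of $\delta(\exp(x))=(d(x)+\tfrac12\{x,x\})\exp(x)$ and your Cartan-type homotopy $\exp(x')-\exp(x)=\delta\bigl(\int_0^1 y\cdot\exp(x_t)\,dt\bigr)$ along the gauge flow $\dot x_t=d(y)+[y,x_t]$ are the standard arguments carried out in that reference, and you correctly flag the only genuinely delicate point, namely the parity-reversion dictionary between the odd bracket of the Chevalley--Eilenberg differential and the even bracket of the Maurer--Cartan and gauge formulae.
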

\noproof

In order to apply the above ideas to the construction of classes in compactifications of the moduli space, we need to first introduce pronilpotent versions of the differential graded Lie algebras described in Section \ref{sec_ncgeom}. For a given symplectic vector space $(V,\innprod)$, the differential graded Lie algebras $\Lambda_{\gamma,\nu}[V]$ and $\Lambda_\gamma[V]$ have a natural filtration in which the deformation parameter $\gamma$ has order 2, the deformation parameter $\nu$ has order 1, and a cyclic word in $\mathfrak{h}[V]$ has order $i$.

\begin{defi}
The pronilpotent differential graded Lie algebras $\widehat{\Lambda}_{\gamma,\nu}[V]$ and $\widehat{\Lambda}_\gamma[V]$ are defined to be the natural completions of $\Lambda_{\gamma,\nu}[V]$ and $\Lambda_{\gamma}[V]$ with respect to the above filtrations.
\end{defi}

The filtration on $\Lambda_{\gamma,\nu}[V]$ naturally induces filtrations on the direct limits
\begin{displaymath}
\begin{split}
\Lambda^1_{\gamma,\nu} & :=\dilim{n}{\Lambda_{\gamma,\nu}[W^1_{n|n}]} \\
\Lambda^0_{\gamma,\nu} & :=\dilim{n,m}{\Lambda_{\gamma,\nu}[W^0_{2n|m}]} \\
\end{split}
\end{displaymath}
which allow us to consider their completions $\widehat{\Lambda}^1_{\gamma,\nu}$ and $\widehat{\Lambda}^0_{\gamma,\nu}$ with respect to these filtrations. The same remarks apply to $\Lambda_{\gamma}[V]$ and its direct limits. The following theorem is a simple consequence of Theorem \ref{thm_modspc}.

\begin{theorem} \label{thm_cmdspc}
If the prefix $\mathcal{P}$ denotes primitive homology as before, there are isomorphisms in homology:
\begin{displaymath}
\begin{split}
\mathcal{P}\widehat{H}_\bullet(\widehat{\Lambda}_{\gamma,\nu}^1,\mathfrak{pe}) & \cong \prod_{\chi=-1}^{-\infty} \left[\bigoplus_{\begin{subarray}{c} g\geq 0, \ n\geq 1: \\ \chi=2-2g-n  \end{subarray}} H_\bullet(\lcmp/\mathbb{S}_n) \right] \\
\mathcal{P}\widehat{H}_\bullet(\widehat{\Lambda}_{\gamma}^1,\mathfrak{pe}) & \cong \prod_{\chi=-1}^{-\infty} \left[\bigoplus_{\begin{subarray}{c} g\geq 0, \ n\geq 1: \\ \chi=2-2g-n  \end{subarray}} H_\bullet([\kcmp\times\Delta_n^\circ]^\infty/\mathbb{S}_n) \right]
\end{split}
\end{displaymath}
\end{theorem}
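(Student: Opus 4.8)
The plan is to derive the completed isomorphisms of Theorem~\ref{thm_cmdspc} directly from the uncompleted ones of Theorem~\ref{thm_modspc}, the guiding principle being that passing to the completions has no effect other than to replace the direct sum over Euler characteristics appearing on the right-hand side by a direct product, and that homology of complexes of vector spaces commutes with both operations.

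First I would make explicit the grading by Euler characteristic $\chi$ that underlies Theorem~\ref{thm_modspc}. One checks that the differential $d=\gamma\cdot\delta+\Delta$ is homogeneous with respect to the grading by $\chi=2-2g-n$ coming from the moduli interpretation, in which adjoining a handle (a power of $\gamma$) lowers $\chi$ by $2$ and adjoining a marked point (a power of $\nu$) lowers $\chi$ by $1$: both the bracket term $\gamma\cdot\delta$, which merges two cyclic words while raising the power of $\gamma$, and the cobracket $\Delta$, which splits a cyclic word, preserve this value. Consequently the relative Chevalley--Eilenberg complex $C_\bullet(\Lambda^1_{\gamma,\nu},\mathfrak{pe})$ decomposes as a direct sum $\bigoplus_{\chi=-1}^{-\infty} C^{(\chi)}_\bullet$ of subcomplexes, and Theorem~\ref{thm_modspc} is precisely the assertion that $\mathcal{P}H_\bullet(C^{(\chi)}_\bullet)\cong\bigoplus_{g,n:\,\chi=2-2g-n}H_\bullet(\lcmp/\mathbb{S}_n)$.

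Next I would compare this $\chi$-grading with the order filtration (with $\gamma$ in order $2$, $\nu$ in order $1$, and a cyclic word in its length) that defines the completions $\widehat{\Lambda}^1_{\gamma,\nu}$ and $\widehat{C}_\bullet$. The key point is that the two are commensurate: fixing $\chi$ fixes $2g+n$ and hence bounds the genus and the number of marked points, so that (the corresponding moduli spaces being finite-dimensional) the relevant word-lengths and Chevalley--Eilenberg degrees are bounded as well, and only finitely many filtration orders occur for a given $\chi$; conversely a fixed order allows only finitely many values of $\chi$. It follows that the order filtration restricted to a fixed $\chi$ is finite, so that $C^{(\chi)}_\bullet$ is unaffected by completion---indeed $\mathcal{P}H_\bullet(C^{(\chi)}_\bullet)$ is finite-dimensional, being a finite sum over the $(g,n)$ realising $\chi$ of the finite-dimensional spaces $H_\bullet(\lcmp/\mathbb{S}_n)$---while the global completion merely reassembles these pieces into a direct product, $\widehat{C}_\bullet(\widehat{\Lambda}^1_{\gamma,\nu},\mathfrak{pe})\cong\prod_{\chi=-1}^{-\infty} C^{(\chi)}_\bullet$.

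Finally, since direct products of vector spaces are exact, homology commutes with the product, giving $\mathcal{P}\widehat{H}_\bullet(\widehat{\Lambda}^1_{\gamma,\nu},\mathfrak{pe})\cong\prod_{\chi=-1}^{-\infty}\mathcal{P}H_\bullet(C^{(\chi)}_\bullet)$; substituting the identification supplied by Theorem~\ref{thm_modspc} then yields the displayed formula, and the identical argument with $[\kcmp\times\Delta_n^\circ]^\infty$ in place of $\lcmp$ handles $\widehat{\Lambda}^1_\gamma$. The step I expect to be the main obstacle is the bookkeeping that makes the third paragraph rigorous: one must verify that taking the primitive part $\mathcal{P}$ commutes with the infinite product over $\chi$---which holds because the Hopf-algebra structure of Section~\ref{sec_ncgeom} respects the $\chi$-grading---and that the stabilizing direct limit over $n$ is compatible with the completion within each fixed $\chi$, which is precisely where finite-dimensionality after stabilization is needed.
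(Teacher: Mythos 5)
Your argument is correct and is essentially the argument the paper has in mind: the paper offers no proof, simply declaring Theorem \ref{thm_cmdspc} ``a simple consequence of Theorem \ref{thm_modspc},'' and your decomposition by Euler characteristic into finite-dimensional subcomplexes unaffected by completion, followed by exactness of products over a field, is the standard way to make that claim precise. The points you flag at the end (compatibility of $\mathcal{P}$ with the product and of stabilization with completion) are indeed the only places requiring care, and both go through because the Hopf-algebra structure and the order filtration are homogeneous for the $\chi$-grading.
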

\noproof

\begin{rem}
As before, a similar theorem applies to the differential graded Lie algebras $\widehat{\Lambda}^0_{\gamma,\nu}$ and $\widehat{\Lambda}^0_{\gamma}$ if we consider the homology of the moduli space \emph{with twisted coefficients}.
\end{rem}

This leads to the following natural corollary of Theorems \ref{thm_char} and \ref{thm_cmdspc}.

\begin{cor} \label{cor_char}
Given a symplectic vector space $(V,\innprod)$ with an odd symplectic form, the characteristic class construction yields natural maps
\begin{displaymath}
\begin{split}
\ch: & \mcm{\widehat{\Lambda}_{\gamma,\nu}[V]} \to \prod_{\chi=-1}^{-\infty} \left[\bigoplus_{\begin{subarray}{c} g\geq 0, \ n\geq 1: \\ \chi=2-2g-n  \end{subarray}} H_\bullet(\lcmp/\mathbb{S}_n) \right] \\
\ch: & \mcm{\widehat{\Lambda}_{\gamma}[V]} \to \prod_{\chi=-1}^{-\infty} \left[\bigoplus_{\begin{subarray}{c} g\geq 0, \ n\geq 1: \\ \chi=2-2g-n  \end{subarray}} H_\bullet([\kcmp\times\Delta_n^\circ]^\infty/\mathbb{S}_n) \right]
\end{split}
\end{displaymath}
producing homology classes in compactifications of the moduli space of curves. Here $\ch$ denotes the projection of $\tilde{\ch}$ onto primitive homology. If, alternatively, the symplectic form is even, this construction produces classes in homology with twisted coefficients.
\end{cor}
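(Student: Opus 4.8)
The plan is to assemble the desired map as a composite, using Theorem \ref{thm_char} at one end and Theorem \ref{thm_cmdspc} at the other, with the intermediate arrows supplied by functoriality of the completed relative Chevalley--Eilenberg construction together with the Hopf-algebra projection onto primitives. The corollary is essentially a matter of gluing the two cited theorems, so the real content lies in arranging the relative, stable, and primitive structures so that the output lands in the stated moduli-space homology.

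First I would observe that, since every symplectic vector space with an odd form is isomorphic to some $W^1_{n|n}$, the pair $(V,\innprod)$ furnishes a canonical morphism of pronilpotent differential graded Lie algebras $\widehat{\Lambda}_{\gamma,\nu}[V]\to\widehat{\Lambda}^1_{\gamma,\nu}$ carrying the subalgebra $\mathfrak{pe}[V]$ into $\mathfrak{pe}$; this is just the structure map into the direct limit, completed with respect to the filtrations. Applying Theorem \ref{thm_char} to the pronilpotent differential graded Lie algebra $\widehat{\Lambda}_{\gamma,\nu}[V]$ then gives a well-defined map $\tilde{\ch}\colon\mcm{\widehat{\Lambda}_{\gamma,\nu}[V]}\to\widehat{H}_\bullet(\widehat{\Lambda}_{\gamma,\nu}[V])$ on the Maurer--Cartan moduli space.

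Next I would push this class through three functorial operations. The chain-level projection $\widehat{C}_\bullet(\widehat{\Lambda}_{\gamma,\nu}[V])\to\widehat{C}_\bullet(\widehat{\Lambda}_{\gamma,\nu}[V],\mathfrak{pe}[V])$ onto the relative complex sends $\tilde{\ch}$ to a class in the relative completed homology; the stabilization morphism of the previous paragraph maps this into $\widehat{H}_\bullet(\widehat{\Lambda}^1_{\gamma,\nu},\mathfrak{pe})$; and finally the Hopf-algebra structure on the latter (the Milnor--Moore decomposition recalled before Theorem \ref{thm_modspc}) supplies a projection onto the primitive summand $\mathcal{P}\widehat{H}_\bullet(\widehat{\Lambda}^1_{\gamma,\nu},\mathfrak{pe})$. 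Composing with the isomorphism of Theorem \ref{thm_cmdspc} identifies the target with $\prod_\chi\bigoplus_{g,n}H_\bullet(\lcmp/\mathbb{S}_n)$, producing the first asserted map; the second (Kontsevich) map is obtained identically from the $\widehat{\Lambda}_\gamma$ row of Theorem \ref{thm_cmdspc}, and the even case is handled verbatim after replacing $\mathfrak{pe}$ by $\mathfrak{osp}$ and reading the homology with the twisted coefficients indicated in the remark following Theorem \ref{thm_cmdspc}.

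The step I expect to require the most care is the passage to primitive homology. The characteristic cycle $\tilde{\ch}(x)=\exp(x)$ is \emph{grouplike} for the cocommutative coproduct on the symmetric coalgebra underlying $\widehat{C}_\bullet$, so its homology class is grouplike in the complete Hopf algebra $\widehat{H}_\bullet(\widehat{\Lambda}^1_{\gamma,\nu},\mathfrak{pe})$; by Milnor--Moore this Hopf algebra is cofree on its primitives, whence a grouplike class is the exponential of a unique primitive and the projection onto the linear, indecomposable part is exactly the logarithm. I would then verify that this projection is well-defined on homology, independent of the chosen identification $V\cong W^1_{n|n}$, and compatible with the Maurer--Cartan equivalence; the last is automatic, since every arrow after $\tilde{\ch}$ is a natural map of homology functors and $\tilde{\ch}$ already descends to $\mcm{\widehat{\Lambda}_{\gamma,\nu}[V]}$ by Theorem \ref{thm_char}. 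The only genuinely nontrivial point is that the grouplike class lies in the image of the exponential, so that extracting its primitive part records all the information the statement asks for; here the cocommutative Hopf structure, rather than any further geometry, does the work.
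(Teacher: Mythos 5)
Your proposal is correct and matches the paper's intent exactly: the paper offers no written proof, presenting the statement as an immediate consequence of Theorems \ref{thm_char} and \ref{thm_cmdspc} with $\ch$ defined as the projection of $\tilde{\ch}$ onto primitive homology, which is precisely the composite (characteristic class, relative quotient, stabilization, primitive projection, identification with moduli-space homology) that you spell out. Your elaboration of the grouplike/primitive step is a sound filling-in of the detail the paper leaves implicit.
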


\section{Pairing of classes and perturbative expansions} \label{sec_pairing}

In this paper we have considered two constructions, one which produces \emph{homology} classes in the moduli space and another which produces \emph{cohomology} classes. In this section we will derive expressions which determine what happens when we evaluate one construction upon the other. We will see that we can express the answer as the perturbative expansion of some functional integral over a finite space of fields.

Given a symplectic vector space $(V,\innprod)$, consider a Maurer-Cartan element
\[ x\in\mcs{\widehat{\Lambda}_{\gamma,\nu}[V]}. \]
In \cite{hamqme} such a structure was called a \emph{quantum $\ai$-structure}, since it was demonstrated in that paper that these structures are obtained from the deformation of an $\ai$-structure on $V$, which in turn is a kind of homotopy-invariant generalization of the notion of an associative algebra structure introduced by Stasheff in \cite{htopas}. In what follows we shall assume that $x$ is at least cubic in the grading on $\widehat{\Lambda}_{\gamma,\nu}[V]$ that was described in Section \ref{sec_dgla}. By Corollary \ref{cor_char} the quantum $\ai$-structure $x$ produces a family of \emph{homology} classes
\[ \ch(x) \in \prod_{\chi=-1}^{-\infty} \left[\bigoplus_{\begin{subarray}{c} g\geq 0, \ n\geq 1: \\ \chi=2-2g-n  \end{subarray}} H_\bullet(\lcmp/\mathbb{S}_n) \right]. \]

Suppose now we take a contractible differential graded Frobenius algebra $A$. In Section \ref{sec_ncbv} we described how this Frobenius algebra produces a family of \emph{cohomology} classes
\[ Q_A \in \prod_{\chi=-1}^{-\infty} \left[\bigoplus_{\begin{subarray}{c} g\geq 0, \ n\geq 1: \\ \chi=2-2g-n  \end{subarray}} H^\bullet(\lcmp/\mathbb{S}_n) \right] \]
with coefficients in $\gf[h]$.

We may take the cocycle $Q_A$ and evaluate it on the cycle $\ch(x)$. At first there may seem to be some uncertainty as to whether the result is well-defined or not, due to the implicit infinite summation that is involved. However, evaluating the component of the cocycle $Q_A$ with Euler characteristic $\chi=-n$ on the cycle $\ch(x)$ produces a homogenous polynomial in $h$ of order $n$. Hence evaluating $Q_A$ on $\ch(x)$ produces a well defined \emph{power series}
\[ Q_A[\ch(x)] \in \gf[[h]] \]
whose \emph{coefficients} represent the pairing of homology and cohomology classes, with real coefficients, in moduli spaces of curves of a \emph{fixed Euler characteristic}.

We would like to determine a more convenient expression for the power series produced by evaluating the cocycle $Q_A$ on $\ch(x)$. Consider the Maurer-Cartan element $x\in\mcs{\widehat{\Lambda}_{\gamma,\nu}[V]}$. Applying the map $\Phi_A$ defined by Figure \eqref{fig_dglamap}, which extends to the completed differential graded Lie algebras, to the Maurer-Cartan element $x$, produces a new Maurer-Cartan element
\[ y:=\Phi_A(x)\in\mcs{\widehat{\Lambda}_{\gamma,\nu}[V\otimes A]}. \]
This Maurer-Cartan element $y$ describes a quantum $\ai$-structure on $V\otimes A$. Let us comment briefly on how this structure is to be interpreted. It is of course well-known that there is a canonical way to make the tensor product of two algebras into an algebra. The same is true when we take the tensor product of an algebra and an $\ai$-algebra; there is a canonical way to give the tensor product the structure of an $\ai$-algebra (the same is not true for two $\ai$-algebras, see for instance \cite{markdiag} and \cite{umdiag}). The quantum $\ai$-structure $y$ is the result of tensoring the quantum $\ai$-structure $x$ on $V$ with the algebra $A$. Consider the potential
\[ \bar{y}:=\mathrm{N}(y)\in\gf[[h]]\cotimes\widehat{S}([V\otimes A]^*):=\gf[[h]]\cotimes\prod_{i=0}^\infty[(V\otimes A)^*]^{\otimes i}_{\mathbb{S}_i}. \]
We may use this potential to express the pairing of the two above classes.

\begin{theorem} \label{thm_pairing}
The power series arising from the evaluation of the cocycle $Q_A$ on the cycle $\ch(x)$ is computed by the following ``integral'',
\begin{equation} \label{eqn_partition}
Q_A[\ch(x)] = \ln\left[\frac{\int_{\tilde{L}} e^{\frac{1}{h}(\bar{y}-\sigma_d)}}{\int_{\tilde{L}}e^{-\frac{1}{h}\sigma_d}}\right],
\end{equation}
where the Lagrangian subspace $\tilde{L}:=V\otimes L$ is any Lagrangian subspace arising from a Hodge decomposition of $A$, as in Section \ref{sec_ncbv}.
\end{theorem}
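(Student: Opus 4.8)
The plan is to reduce the statement to a single Gaussian (Wick) integral by unwinding the definitions, and then to account for the primitive projection hidden in $\ch(x)$ by means of the exponential formula relating a group-like homology class to its primitive logarithm. Throughout I would work component by component in the Euler characteristic, so that for each fixed $\chi=-n$ the quantities involved are genuine polynomials in $h$ and the formal sums below converge in the $h$-adic sense; this is precisely the observation, recorded just before the theorem, that the $\chi=-n$ component of $Q_A[\ch(x)]$ is homogeneous of degree $n$ in $h$.

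First I would compute the pairing of $Q_A$ with the \emph{full} characteristic cycle $\tilde{\ch}(x)=\exp(x)=\sum_{k\geq 0}\tfrac1{k!}x^{\cdot k}$. The key point is that every map in the composition defining $Q_A$ in \eqref{eqn_cocycleformula} is multiplicative for the symmetric product on Chevalley--Eilenberg chains: the map $\Phi_A$ is induced by a map of Lie algebras and hence is the symmetric algebra functor applied to it, so $\Phi_A(x^{\cdot k})=y^{\cdot k}$ with $y=\Phi_A(x)$; likewise $\mathrm{N}$ of \eqref{eqn_nctocomm} gives $\mathrm{N}(y^{\cdot k})=\bar y^{\cdot k}$ with $\bar y=\mathrm{N}(y)$; and by its definition \eqref{eqn_BVmult} the map $\mathrm{M}$ sends the $k$-fold symmetric power to $\mathrm{M}(\bar y^{\cdot k})=h^{-k}\bar y^{k}$, the ordinary product in $\mathfrak{p}[V\otimes A]$. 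Substituting into \eqref{eqn_cocycleformula} and summing the series yields
\[
Q_A[\tilde{\ch}(x)]=\sum_{k\geq 0}\frac{1}{k!}\,\frac{\int_{\tilde L} h^{-k}\bar y^{k}\, e^{-\frac1h\sigma_d}}{\int_{\tilde L} e^{-\frac1h\sigma_d}}=\frac{\int_{\tilde L} e^{\frac1h(\bar y-\sigma_d)}}{\int_{\tilde L} e^{-\frac1h\sigma_d}},
\]
which is exactly the bracketed expression $Z$ on the right-hand side of \eqref{eqn_partition}. This is the straightforward half of the argument and requires only careful bookkeeping of the Koszul signs in the symmetric products.

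It remains to insert the logarithm, and this is where the primitive projection enters and is the main obstacle. By Corollary \ref{cor_char} the class $\ch(x)$ is the image of $\tilde{\ch}(x)$ under the projection onto primitive homology; since $x$ is primitive at chain level, $\tilde{\ch}(x)$ is group-like, and by the Milnor--Moore theorem \cite{mmthm} the (completed) homology Hopf algebra is free commutative on its primitives, so in homology $\tilde{\ch}(x)=\exp_\bullet(\ch(x))$, the exponential being taken for the Hopf (symmetric) product with $\ch(x)$ primitive. The two descriptions must then be matched: expanding the numerator integral by Wick's formula writes $Z$ as a sum over all Feynman graphs whose vertices are the tensors built from $\bar y$ and whose edges are contractions against $\sigma_d^{-1}$, disconnected graphs contributing the product of their connected components. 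The pairing of $Q_A$ with the \emph{primitive} class $\ch(x)$ picks out precisely the connected graphs, because under the identification of Theorem \ref{thm_modspc} primitive homology corresponds to \emph{connected} surfaces. Invoking the exponential (linked-cluster) formula, which asserts that the full partition function is the exponential of the sum over connected graphs, then gives $Q_A[\ch(x)]=\ln Z$, as claimed.

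The delicate points I would need to treat with care are: (i) that the identification ``group-like $=\exp_\bullet$ of its primitive part'' is compatible, under evaluation of the cocycle $Q_A$, with the analytic exponential formula for the Gaussian integral --- equivalently, that $Q_A$ applied to $\exp_\bullet(p)$ for a primitive $p$ genuinely factorizes as the exponential of its value on $p$, which is the content of the linked-cluster theorem applied to the Wick expansion; and (ii) keeping the automorphism weights $1/|\mathrm{Aut}\,\Gamma|$ consistent between the Chevalley--Eilenberg side (the factors $1/k!$ in $\exp$ together with the symmetrizations) and the Feynman-diagram side. Both are standard once the graph-theoretic dictionary is in place, but together they form the technical heart of the proof; the interchange of summation and integration is harmless because of the $h$-grading noted at the outset.
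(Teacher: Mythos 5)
Your proposal is correct and follows essentially the same route as the paper: first use the multiplicativity of $\Phi_A$, $\mathrm{N}$ and $\mathrm{M}$ on the (completed) Chevalley--Eilenberg chains to show that pairing $Q_A$ with the full group-like cycle $\exp(x)$ yields the bracketed ratio of Gaussian integrals, and then observe that passing to the primitive part $\ch(x)$ corresponds to restricting the Wick expansion to connected graphs, which by the linked-cluster (exponential) formula amounts to taking the logarithm. The paper compresses the second step into a citation of the standard connected-graph/logarithm fact, whereas you spell out the Milnor--Moore and Feynman-diagram bookkeeping, but the argument is the same.
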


\begin{rem}
Here the term integral is meant in a very loose sense, as the ``asymptotic expansion'', i.e. in terms of Feynman diagrams, of the integral about $h=0$. The term asymptotic expansion is itself also meant in a loose sense, since as the potential $y$ is a formal power series in $h$, there is no guarantee that even the integrand will converge for nonzero values of $h$.

To be more precise, and to avoid any ambiguity in the definition of the above expression, we may describe it as follows. Write the potential $\bar{y}$ as
\[ \bar{y} = \sum_{i=0,j=1}^\infty h^i\bar{y}_{ij}, \]
where $\bar{y}_{ij}$ is a homogeneous polynomial of order $j$; then
\[ Q_A[\ch(x)] = \ln\left[\sum_{n=0}^\infty\sum_{\begin{subarray}{c} i_1,\ldots,i_n = 0 \\ j_1,\ldots,j_n = 0 \\ \end{subarray}}^\infty \frac{h^{i_1+\cdots+i_n-n}}{n!}\frac{\int_{\tilde{L}}\bar{y}_{i_1 j_1}\cdots\bar{y}_{i_n j_n}e^{-\frac{1}{h}\sigma_d}}{\int_{\tilde{L}}e^{-\frac{1}{h}\sigma_d}}\right]. \]
Note that since all the terms in the above expression are integrals of polynomials with respect to a Gaussian measure, these terms are well-defined. As $\bar{y}_{01}=\bar{y}_{02}=0$, this series converges to a formal power series in $h$.
\end{rem}

\begin{proof}
Since the map $\phi:\widehat{C}_\bullet(\mathfrak{g})\to\widehat{C}_\bullet(\mathfrak{g})$ induced by a map $\phi:\mathfrak{g}\to\mathfrak{g}$ of differential graded Lie algebras is a map of commutative algebras, it follows that
\begin{displaymath}
\begin{split}
Q_A[\exp(x)] &= \frac{\int_{\tilde{L}} \mathrm{M}\mathrm{N}\Phi_A(e^x)\cdot e^{-\frac{1}{h}\sigma_d}}{\int_{\tilde{L}} e^{-\frac{1}{h}\sigma_d}} \\
&= \frac{\int_{\tilde{L}} \mathrm{M}(e^{\mathrm{N}\Phi_A(x)})\cdot e^{-\frac{1}{h}\sigma_d}}{\int_{\tilde{L}} e^{-\frac{1}{h}\sigma_d}} \\
&= \frac{\int_{\tilde{L}} e^{\frac{1}{h}(\bar{y}-\sigma_d)}}{\int_{\tilde{L}} e^{-\frac{1}{h}\sigma_d}}
\end{split}
\end{displaymath}
In this expression, the integral is computed in a purely formal manner by interchanging the integral with the infinite sum in the exponential, which arises due to the fact that we are now dealing with power series rather than polynomials. Hence, this expression yields the Feynman diagram expansion of this integral. Computing the expression $Q_A[\ch(x)]$ involves first projecting onto the \emph{primitive elements} of the Chevalley-Eilenberg homology. It is the primitive elements which correspond to \emph{connected} Riemann surfaces; or equivalently, in terms of the Feynman diagram expansion, \emph{connected} graphs. It is well-known, see e.g. \cite{manin}, that summing only over \emph{connected} diagrams in the Feynman diagram expansion leads to the \emph{natural logarithm} of the above expression.
\end{proof}

Let us take a moment to explain one of Kontsevich's original motivations for these constructions and their connection to Chern-Simons theory. The idea, loosely speaking, is that the partition function of Chern-Simons theory could be produced by such a pairing of these two constructions. The data needed for the construction which produces the cocycles is a differential graded Frobenius algebra $A$. In Chern-Simons theory, this data is supplied by taking the de Rham algebra of some compact space-time manifold $M$. The data needed to construct the cycles is a quantum $\ai$-algebra, which as we mentioned, will be obtained from some associative algebra. In Chern-Simons theory we are supposed to simply take our associative algebra to be a matrix algebra.

In pairing these two classes we arrive at an expression of the form \eqref{eqn_partition}. That is, we arrive at some functional integral whose space of fields over which we integrate is formed by taking the tensor product of a de Rham algebra and a matrix algebra; i.e. our space of fields consists of matrix-valued forms, or \emph{connections}. Since the kinetic term $\sigma_d$ is degenerate, in order to make sense of the functional integral it is necessary to fix a gauge, or rather to choose a Lagrangian subspace. This gauge is provided by placing a Riemannian metric on the space-time manifold $M$, under which the gauge-fixing condition becomes $d^*=0$. Furthermore, it follows from Theorem \ref{thm_cocycle} that the expression \eqref{eqn_partition} is not sensitive to this choice of a gauge condition. Rather, that is to say that it does not depend upon the choice of the Riemann metric on $M$, or in other words, it is a \emph{topological quantum field theory}.

There are, of course, serious issues with the above interpretation. One of the most significant of these is that throughout this paper we have necessarily assumed that our space of fields is finite-dimensional, whereas this is rarely the case in practice (although exceptions do exist), and leads to significant further complications such as the need for renormalization. Furthermore, it was necessary to assume that our differential graded algebras were contractible; another condition that is obviously difficult to accommodate in practice. Nonetheless, the machinery we have developed up to this point remains useful, and in the next section we will turn our attention to some examples. In particular, we will see how matrix integrals arise from our constructions.

\section{Matrix integrals and other computations} \label{sec_compute}

In this paper we have described two constructions producing classes in the moduli space of curves. In general, determining whether a given cohomology class is trivial or not is a difficult problem. One method is to evaluate that cohomology class upon a homology class, as we did in the previous section. If the number produced is nonzero, it follows that both our homology and cohomology classes are nontrivial. Hence, we may determine whether our constructions produce nontrivial classes by evaluating integral expressions of the form \eqref{eqn_partition}. This idea was originally proposed by Kontsevich in his seminal paper \cite{kontfeyn}, but until now it has not been carried out in any concrete example. In this section, we will work through the details in a simple example for the first time.

\subsection{Examples} \label{sec_examples}

Consider the parity reversion $V:=\Pi\gf$ of the ground field $\gf$, whose dual space is spanned by a single odd variable $t$ dual to $\Pi(1)$. The space $V$ has a canonical even symplectic form $\innprod_V:=t\otimes t$. In \cite[\S 6]{hamqme} it was shown that the element
\begin{equation} \label{eqn_exmqme}
x:=\sum_{i=1}^\infty a_i t^{2i+1}\in\mathfrak{h}_{\geq 3}[V]\subset\Lambda_{\gamma}[V]
\end{equation}
was a solution to the quantum master equation in $\Lambda_{\gamma}[V]$, for any choice of values of the coefficients $a_i\in\gf$. The origins of this example may be traced back to \cite{kontfeyn}. It follows from Corollary \ref{cor_char} that this Maurer-Cartan element gives rise to a family of homology classes $\ch(x)$ in the Kontsevich compactification of the moduli space with twisted coefficients.

The input for our second construction which produces cohomology classes will be taken from the paper \cite[\S 5]{dualfeyn}. This is a $1|1$-dimensional contractible differential graded algebra $\Xi$ generated by a unit $1$ in even degree and an odd generator $a$ satisfying $a^2=1$. The differential $d$ is determined by the equation $d(a)=1$ and its odd symmetric bilinear form is determined by $\langle a,1 \rangle = 1$. One may verify, by means of a simple calculation, that the map $\beta$ of Equation \eqref{eqn_beta} vanishes. It follows from Proposition \ref{prop_betavanish} that $\Xi$ gives rise to a family $Q_\Xi$ of cohomology classes on the Kontsevich compactification of the moduli space with twisted coefficients.

We may evaluate the pairing $Q_\Xi[\ch(x)]$ of these two constructions by the formulae of the preceding section. The Frobenius algebra $\Xi$ has a canonical Hodge decomposition specified by the equation $s(1)=a$, which determines an isotropic subspace $L=\im(s)$ of $\Xi$ and a Lagrangian subspace $\tilde{L}:=V\otimes L$ of $V\otimes \Xi$ of dimension $1|0$ generated by the element $\Pi(1)\otimes a$. The restriction of the tensor $t_n:\Xi^{\otimes n}\to\gf$ defined by Equation \eqref{eqn_product} to the subspace $L$ is described by the formulae
\begin{displaymath}
\begin{split}
t_n(a,\ldots,a)=1 & \quad n \text{ odd} \\
t_n(a,\ldots,a)=0 & \quad n \text{ even}
\end{split}
\end{displaymath}
From this we may conclude that the potential $\bar{y}:=\mathrm{N}\Phi_\Xi(x)$, when restricted to the Lagrangian subspace $\tilde{L}$, is given by
\[ \bar{y}|_{\tilde{L}}=\sum_{i=1}^\infty (-1)^i a_i \hat{t}^{2i+1}, \]
where $\hat{t}$ is the variable dual to the generator $\Pi(1)\otimes a$ of $\tilde{L}$. We may also calculate that the quadratic form $\sigma_d$ defined by \eqref{eqn_degform}, when restricted to $\tilde{L}$, is given by
\[ \sigma_d|_{\tilde{L}} = \hat{t}^2. \]
It follows from Theorem \ref{thm_pairing} that the pairing of $Q_\Xi$ and $\ch(x)$ is given by
\begin{equation} \label{eqn_pair}
Q_\Xi[\ch(x)] = \ln\left[\frac{\int_{\mathbb{R}} e^{\frac{1}{h}\left(\sum_{i=1}^\infty (-1)^i a_i \hat{t}^{2i+1} - \hat{t}^2\right)}d\hat{t}}{\int_{\mathbb{R}}e^{-\frac{1}{h}\hat{t}^2}d\hat{t}}\right].
\end{equation}

Let us compute this expression in the simple case where one $a_i$ is nonzero, say $a_i=(-1)^i$, and all of the other coefficients $a_j:j\neq i$ are zero. Let us denote the corresponding solution to the quantum master equation by $\tilde{x}_i$. In this case, a completely standard calculation involving standard formulae for the integral of a polynomial function against a Gaussian measure implies that
\[ Q_\Xi[\ch(\tilde{x}_i)] = \ln\left[\sum_{n=0}^\infty \frac{[2n(2i+1)]!}{2^{n(2i+1)}[n(2i+1)]![2n]!} h^{n(2i-1)}\right]. \]

It is clear from this calculation that both the cohomology class coming from the differential graded algebra $\Xi$ and the homology classes coming from the solution $\tilde{x}_i$ to the quantum master equation are nontrivial. In fact, one may easily show that the homology class
\[ \ch(\tilde{x}_i) \in \prod_{\chi=-1}^{-\infty} \left[\bigoplus_{\begin{subarray}{c} g\geq 0, \ n\geq 1: \\ \chi=2-2g-n  \end{subarray}} H_\bullet(\lcmp/\mathbb{S}_n) \right]. \]
is concentrated in degrees $-\frac{2i+1}{2i-1}\chi-1$, where $\chi$ is the corresponding Euler characteristic of the relevant component of the moduli space. In particular we see that the element $\tilde{x}_1$ produces top dimensional classes, and that the other solutions produce classes in higher codimensions. Hence we see that the cohomology class $Q_\Xi$ has nontrivial components in dimensions and codimensions other than zero.

That the homology classes $\ch(\tilde{x}_i)$ are nontrivial is to be expected, as it was shown independently in \cite{igusa} and \cite{mondello} that the classes $\ch(x)$ for a general choice of coefficients generate the kappa classes in the moduli space. The cohomology class $Q_\Xi$ is new however, and with a little work, we may refine slightly our knowledge of its nontrivial components.

\begin{prop}
The cohomology class
\[ Q_\Xi \in \prod_{\chi=-1}^{-\infty} \left[\bigoplus_{\begin{subarray}{c} g\geq 0, \ n\geq 1: \\ \chi=2-2g-n  \end{subarray}} H_\bullet([\kcmp\times\Delta_n^\circ]^\infty/\mathbb{S}_n)\right] \]
has a nontrivial component in every Euler characteristic.
\end{prop}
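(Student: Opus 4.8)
The plan is to prove the proposition by pairing, for each Euler characteristic $\chi=-N$ with $N\geq 1$, the component $Q_\Xi^{(-N)}$ against a suitable homology class and showing the result is nonzero. Since a coboundary annihilates every cycle, producing even a single cycle on which $Q_\Xi^{(-N)}$ evaluates nontrivially certifies that $Q_\Xi^{(-N)}$ represents a nonzero class. The natural supply of cycles is the family $\ch(x)$ coming from the Maurer-Cartan elements $x=\sum_i a_i t^{2i+1}$ of \eqref{eqn_exmqme}; these live on the Kontsevich compactification, exactly where $Q_\Xi$ lives by Proposition \ref{prop_betavanish}, so the pairing makes sense. First I would keep the coefficients $a_i$ as free real parameters and regard the pairing \eqref{eqn_pair} as a generating function in $h$ whose coefficients are polynomials $P_N(a_1,a_2,\ldots)$ in the $a_i$; by Theorem \ref{thm_pairing} the coefficient of $h^N$ is precisely $Q_\Xi^{(-N)}$ evaluated on $\ch(x)$. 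It therefore suffices to show that each $P_N$ is not the zero polynomial, for then some real specialization of the $a_i$ gives a genuine cycle $\ch(x)$ with $Q_\Xi^{(-N)}[\ch(x)]\neq 0$.

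The key structural observation is that the logarithm in \eqref{eqn_pair} does \emph{not} destroy the lowest-order information in $a$. Writing the numerator of \eqref{eqn_pair} as a normalized Gaussian expectation $Z=\langle \exp(\tfrac1h\sum_i(-1)^i a_i\hat t^{2i+1})\rangle$ against the measure $e^{-\hat t^2/h}\,d\hat t$, every monomial $a_{i_1}\cdots a_{i_m}$ carries a total $\hat t$-power $2(i_1+\cdots+i_m)+m$, which has the same parity as $m$. Since odd Gaussian moments vanish, the linear-in-$a$ part of $Z$ is identically zero, so $Z=1+Q(a)+O(a^3)$ with $Q$ quadratic. Consequently $\ln Z=Q(a)+O(a^3)$ as well: the quadratic part of $\ln Z$ equals the quadratic part of $Z$, with no cancellation introduced by the logarithm. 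Thus it is enough to locate a single nonvanishing quadratic coefficient of $Z$ landing in order $h^N$.

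The computation is then immediate from the standard even moments $\langle \hat t^{2k}\rangle=(2k-1)!!\,(h/2)^k$. The coefficient of the monomial $a_ia_j$ with $i\neq j$ in $Z$ is
\[ (-1)^{i+j}(2i+2j+1)!!\,2^{-(i+j+1)}\,h^{\,i+j-1}, \]
and since this monomial has degree $2$ it can only arise this way, so it occupies a single order in $h$. Choosing $i=1$ and $j=N$ (both $\geq 1$) places it in order $h^N$ with coefficient $(-1)^{N+1}(2N+3)!!\,2^{-(N+2)}\neq 0$; the degenerate case $N=1$ is handled by the $a_1^2$ coefficient, which is likewise a nonzero double factorial. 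Hence $P_N$ contains a nonzero monomial and is not identically zero, completing the argument. (For safety one may take the two-term element $x=a_1t^3+a_Nt^{2N+1}$ and specialize $a_1,a_N$ generically.)

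The step I expect to be the genuine obstacle — and the one that makes the statement plausible in the first place — is controlling the logarithm in \eqref{eqn_partition}: a priori the passage to connected diagrams could cause the $h^N$ coefficient of a fixed cycle to cancel, which is why one cannot simply read off nontriviality from a single specialization such as \eqref{eqn_pair}. The remedy above sidesteps this entirely by exploiting the vanishing of the linear-in-$a$ part, so that the quadratic term survives the logarithm untouched; the remaining work is only the routine bookkeeping of signs and of the Gaussian moments, which I would not spell out in full.
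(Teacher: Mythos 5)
Your argument is correct, and it takes a genuinely different (and cleaner) route than the paper's. The paper proves the proposition by constructing a \emph{single} Maurer-Cartan element $x_\infty$ through an inductive choice of the coefficients $a_i\in\{0,\pm1\}$: at stage $i$ one switches on $a_i$ precisely when the coefficient of $h^i$ in the series computed so far vanishes, using the fact that the new term shifts the $h^i$ coefficient by a nonzero amount (in effect the same $a_1a_i$ cross term you compute, e.g. the $\tfrac{105}{2}h^2$ appearing in the paper's $i=2$ step) without disturbing lower orders; a $t$-adic limit then yields one cycle $\ch(x_\infty)$ detecting every component of $Q_\Xi$ simultaneously. You instead treat the $a_i$ as indeterminates, note that the parity of the Gaussian moments kills all odd-degree terms in $a$ so that the quadratic part of $\ln Z$ equals that of $Z$, and exhibit the single nonvanishing monomial $a_1a_N$ at order $h^N$ (with $a_1^2$ covering $N=1$); a generic specialization of the two-term element $a_1t^3+a_Nt^{2N+1}$ then detects $Q_\Xi^{(-N)}$. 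Your version makes explicit why the logarithm (the passage to connected diagrams) cannot cause cancellation and avoids both the induction and the limiting element, at the cost of using a different cycle for each Euler characteristic rather than one universal class. The only caveats are bookkeeping: your Gaussian normalization differs from the paper's by powers of $2$ (immaterial for nonvanishing), and one should record, as you implicitly do, that only finitely many monomials $a_{i_1}\cdots a_{i_m}$ (those with $m$ even and $\sum_k i_k=N+m/2$) contribute to the $h^N$ coefficient, so that $P_N$ is a genuine polynomial in finitely many variables and a nonzero real specialization exists.
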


\begin{proof}
We will explain how one may choose the parameters $a_i\in\gf$ of \eqref{eqn_exmqme} so that when we compute the expression $Q_A[\ch(x)]$ using Equation \eqref{eqn_partition}, we obtain a power series in $h$ all of whose coefficients are nontrivial. This will suffice to establish the proposition.

In computing this expression using \eqref{eqn_partition}, we may use the Taylor series expansion
\begin{equation} \label{eqn_natlog}
\ln(1+x) = \sum_{n=1}^\infty \frac{(-1)^{n+1}}{n} x^n.
\end{equation}
We begin by choosing $a_1=-1$ and all other coefficients to be zero. Denote by $x_1$ the corresponding solution to the quantum master equation given by Equation \eqref{eqn_exmqme}. Using \eqref{eqn_natlog} we may compute that
\[ Q_\Xi[\ch(x_1)]= \frac{15}{2}h+O(h^2). \]

Denote this power series by $z_1(h)$ and define $p_1(h):=e^{z_1(h)}-1$. If the coefficient of $h^2$ in $z_1(h)$ is nonzero, then we may leave $a_2=0$. Otherwise, we leave $a_1=-1$, choose $a_2=1$ and set all other coefficients to zero. In this latter case, denote the new corresponding solution to the quantum master equation by $x_2$; again, we may compute that
\begin{displaymath}
\begin{split}
Q_\Xi[\ch(x_2)] & = \ln\left[1+\frac{105}{2}h^2 + p_1(h) + O(h^3)\right] \\
& = z_1(h)+\frac{105}{2}h^2+O(h^3) \\
& = \frac{15}{2}h+\frac{105}{2}h^2 + O(h^3).
\end{split}
\end{displaymath}
The result is a power series $z_2(h)$ whose first two coefficients are nonzero.

We may proceed inductively, defining the $a_i$ as we go, by setting $a_i=(-1)^i$ if the coefficient of $h^i$ in $z_{i-1}(h)$ is zero, and setting $a_i=0$ if it is not. The result is a sequence of solutions $\{x_n\}_{n\in\mathbf{N}}$ to the quantum master equation, which converge in the $t$-adic topology to a solution $x_\infty$ such that
\[Q_\Xi[\ch(x_\infty)]= \lim_{n\to\infty} Q_\Xi[\ch(x_n)] = \lim_{n\to\infty} z_n(h) \]
is a power series in $h$ with no nonzero coefficients. It follows that the cohomology class $Q_\Xi$ must have a nontrivial component in every Euler characteristic.
\end{proof}

\subsection{Matrix integrals}

Given any Frobenius algebra $A$, we may tensor this algebra with the contractible $1|1$-dimensional differential graded algebra $\Xi$ defined in the previous section, to produce a new contractible differential graded Frobenius algebra $E:=A\otimes\Xi$. Since $\Xi$ satisfies the hypothesis of Proposition \ref{prop_betavanish}, it follows simply that $E$ does too. Consequently, we arrive easily at the following theorem.

\begin{theorem}
Any Frobenius algebra $A$ produces a cohomology class
\[ Q_{A\otimes\Xi} \in \prod_{\chi=-1}^{-\infty} \left[\bigoplus_{\begin{subarray}{c} g\geq 0, \ n\geq 1: \\ \chi=2-2g-n  \end{subarray}} H_\bullet([\kcmp\times\Delta_n^\circ]^\infty/\mathbb{S}_n)\right] \]
in the Kontsevich compactification of the moduli space of Riemann surfaces.
\end{theorem}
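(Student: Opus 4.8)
The plan is to deduce the theorem as an immediate application of Proposition \ref{prop_betavanish} to the algebra $E:=A\otimes\Xi$, so that the whole task reduces to verifying the two hypotheses of that proposition for $E$: that $E$ is a contractible differential graded Frobenius algebra, and that the operation $\beta$ of \eqref{eqn_beta} associated to $E$ vanishes. Once these are in place, Proposition \ref{prop_betavanish} furnishes a cocycle $Q_E=Q_{A\otimes\Xi}$ in $C^\bullet(\Lambda_\gamma[V])$ whose cohomology class is independent of the Hodge decomposition, and the remark following that proposition, together with Theorem \ref{thm_modspc}, identifies this class with the asserted family of classes on the Kontsevich compactification.

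First I would equip $E=A\otimes\Xi$ with its standard graded tensor-product Frobenius structure: the multiplication is the graded tensor product of the multiplications on $A$ and $\Xi$, and the bilinear form is $\langle a_1\otimes\xi_1,a_2\otimes\xi_2\rangle_E=(-1)^{|\xi_1||a_2|}\langle a_1,a_2\rangle_A\langle\xi_1,\xi_2\rangle_\Xi$, which is nondegenerate because both factors are and compatible with the differential by the Leibniz rule. The one genuinely homological point is contractibility: since $d(a)=1$ makes $\Xi$ acyclic, the K\"unneth theorem over the ground field $\gf$ gives $H(E)\cong H(A)\otimes H(\Xi)=0$, so $E$ is contractible irrespective of the homology of $A$.

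The crux of the argument is the vanishing of $\beta$ for $E$, and the key observation is that $\beta$ is nothing but left multiplication by the canonical element $e:=\sum_i x_iy^i$ obtained by feeding the copairing $\innprod^{-1}=x_i\otimes y^i$ into the product; indeed $\beta(c)=x_iy^ic=e\cdot c$, so that $\beta=0$ is equivalent to $e=0$ (evaluate on the unit). This element is tensor-multiplicative: the copairing of $E$ is the Koszul interchange of the copairings of $A$ and $\Xi$, carrying a sign $(-1)^{|y_A^p||x_q^\Xi|}$, while forming the product in $E$ contributes exactly the same sign, so the two cancel and one obtains $e_E=e_A\otimes e_\Xi$. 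Since it was verified in Section \ref{sec_examples} that $\beta$ vanishes for $\Xi$, equivalently $e_\Xi=0$, it follows at once that $e_E=0$ and hence that $\beta=0$ for $E$.

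With both hypotheses of Proposition \ref{prop_betavanish} confirmed, the theorem follows: by that proposition the vanishing of $\beta$ forces the tensors $\alpha_n^{g,b}$ of $E$ with $b\geq 1$ to vanish through \eqref{eqn_tensorformula}, the cochain $Q_{A\otimes\Xi}$ descends to a cocycle on $\Lambda_\gamma[V]$, and Theorem \ref{thm_modspc} converts this cocycle into a class on the Kontsevich compactification. I expect the only point demanding care to be the sign bookkeeping in the tensor-multiplicativity of $e$; but this is routine, since the interchange sign of the copairing and the reordering sign of the product are manifestly equal and cancel, and in any case any residual sign would merely multiply a factor $e_\Xi$ that already vanishes.
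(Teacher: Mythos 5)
Your argument is correct and is essentially the paper's own: the paper likewise forms $E:=A\otimes\Xi$, notes that $E$ is contractible and inherits the vanishing of $\beta$ from $\Xi$, and then invokes Proposition \ref{prop_betavanish}. You have merely filled in the (routine) details the paper leaves implicit, namely the K\"unneth argument for contractibility and the tensor-multiplicativity of the element $e=x_iy^i$ implementing $\beta$.
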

\noproof

Now suppose that we take a matrix algebra $M_n(\gf)$ as our Frobenius algebra. Then $E:=M_n(\gf)\otimes \Xi$ has an odd symmetric nondegenerate bilinear form, and a canonical isotropic subspace isomorphic to $M_n(\gf)\otimes\Pi\gf$. In a similar manner to Equation \eqref{eqn_pair}, one may compute that
\[ Q_E[\ch(x)] = \ln\left[\frac{\int_{M_n(\gf)} e^{\frac{1}{h}\left(\sum_{i=1}^\infty (-1)^ia_i\tr(A^{2i+1})-\tr(A^2)\right)} dA}{\int_{M_n(\gf)} e^{-\frac{1}{h}\tr(A^2)}dA}\right] \]
where $x$ is the solution \eqref{eqn_exmqme} to the quantum master equation. In fact, one could write down a similar expression for any Frobenius algebra, not just matrix algebras. These matrix integrals bear remarkable similarity to those integrals considered by Kontsevich in his proof of Witten's conjectures \cite{kontairy}.

\end{document}